\numberwithin{equation}{section}
\newcommand{\beq}{\begin{equation}}
\newcommand{\eeq}{\end{equation}}
\newcommand{\beqs}{\begin{eqnarray*}}
\newcommand{\eeqs}{\end{eqnarray*}}
\newcommand{\beqn}{\begin{eqnarray}}
\newcommand{\eeqn}{\end{eqnarray}}
\newcommand{\beqa}{\begin{array}}
\newcommand{\eeqa}{\end{array}}
\def\lra{\longrightarrow}
\def\bc{\begin{center}}
\def\ec{\end{center}}
\def\begeq{\begin{equation}}
\def\endeq{\end{equation}}
\def\and{\quad{\rm and}\quad}
\let\lra=\longrightarrow
\def\mapright\#1{\,\smash{\mathop{\lra}\limits^{\#1}}\,}
\newtheorem{prop}{Proposition}[section]
\newtheorem{theo}[prop]{Theorem}
\newtheorem{lem}[prop]{Lemma}
\newtheorem{cor}[prop]{Corollary}
\newtheorem{rem}[prop]{Remark}
\newtheorem{defi}[prop]{Definition}
\title  {Convergence of K\"ahler-Ricci flow on Fano manifolds, II}
\author   {Gang Tian}
\author { Xiaohua $\text{Zhu}^*$}
\thanks {* Partially supported by the NSFC Grant 10990013}
 \subjclass {Primary: 53C25; Secondary:  53C55,
 58E11}
\keywords { K\"ahler-Ricci flow, K\"ahler-Einstein metric,
Perelman's $W$-functional }
\address {Gang Tian\\ School of Mathematical Sciences and BICMR, Peking
University, Beijing, 100871, China\\ and Department of Mathematics, Princeton
University,  New Jersey,  NJ 02139, USA\\
 tian@math.mit.edu}
\address{ Xiaohua Zhu\\School of Mathematical Sciences and BICMR, Peking University,
Beijing, 100871, China\\
 xhzhu@math.pku.edu.cn}
\begin{document}
\bibliographystyle{plain}

\begin{abstract} In this paper, we give an alternative proof for the convergence of  K\"ahler-Ricci flow on a Fano mnaifold $(M,J)$. This proof
differs from that in [TZ3].  Moreover,  we generalize the main theorem of [TZ3] to the case that $(M,J)$ may not admit any K\"ahler-Einstein metrics.

\end{abstract}

\maketitle

\section { Introduction}

In this paper, we use Perelman's $W$-functional in [Pe] to study the
convergence of  K\"ahler-Ricci flow on a Fano mnaifold. We  want to
prove

\begin{theo}
\label{th-1} Let $(M,J)$ be  a compact K\"ahler-Einstein manifold
with positive first Chern class $c_1(M)>0$. Then for any initial
K\"ahler metric $g$  with  K\"ahler class $2\pi c_1(M)$,
K\"ahler-Ricci flow converges to a K\"ahler-Einstein metric in the
$C^\infty$-topology.  Moreover, the convergence can be made
exponentially modulo a holomorphism transformation on $(M,J)$.
\end{theo}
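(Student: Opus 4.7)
My plan is to exploit the monotonicity of Perelman's $\mu$-functional along the normalized K\"ahler-Ricci flow, combined with Perelman's a priori estimates in the Fano setting (uniform scalar curvature bound, diameter bound, $\kappa$-noncollapsing, and a $C^0$ bound on the Ricci potential). The argument will first produce a smooth subsequential limit of the flow, identify it as a K\"ahler-Ricci soliton which (using that $(M,J)$ admits a KE metric) must coincide with $g_{\rm KE}$, and finally upgrade this subconvergence to full exponential convergence via a Lojasiewicz--Simon type argument.

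Concretely, fix any time sequence $t_i \to \infty$. Perelman's bounds together with Hamilton's compactness theorem extract a subsequence along which the parabolically translated flows $(M, g(t_i + t))$ converge smoothly, modulo diffeomorphisms, to a solution $(M_\infty, g_\infty(t))$ of the K\"ahler-Ricci flow on a smooth K\"ahler manifold $M_\infty$. Since $\mu(g(t), \tau(t))$ is monotone nondecreasing along the flow and bounded above (by the value attained at $g_{\rm KE}$), it converges to some limit $\mu_\infty$. The rigidity in Perelman's monotonicity formula, namely that $\partial_t \mu \ge 0$ with equality iff the flow is a shrinking gradient soliton, forces $g_\infty(t)$ to be a shrinking gradient K\"ahler-Ricci soliton.

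Next, since $(M,J)$ admits a KE metric, the supremum of $\mu$ over the K\"ahler class $2\pi c_1(M)$ is attained at $g_{\rm KE}$ and equals the $\mu$-value of the limiting soliton. Matching these values and invoking the Bando--Mabuchi uniqueness for KE metrics (together with the fact that in the presence of a KE metric on $(M,J)$ every shrinking K\"ahler-Ricci soliton on $(M,J)$ in class $2\pi c_1(M)$ must be KE up to $\Aut^0(M,J)$) identifies $g_\infty$ with $\sigma^* g_{\rm KE}$ for some $\sigma \in \Aut^0(M,J)$. In particular $M_\infty = M$, so the flow subconverges in $C^\infty$ to the $\Aut^0(M,J)$-orbit of $g_{\rm KE}$.

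To obtain the exponential convergence asserted in the theorem, I would linearize the flow at $g_{\rm KE}$ and exploit the spectral gap of the associated Lichnerowicz operator transverse to the $\Aut^0(M,J)$-orbit. A Lojasiewicz--Simon inequality for the $W$-functional at the critical point $g_{\rm KE}$ yields exponential decay of the normalized distance from $g(t)$ to this orbit, which parabolic smoothing then promotes to $C^\infty$ exponential convergence modulo a time-dependent biholomorphism. I expect the main obstacle to be the identification step: ruling out that $M_\infty$ ``jumps'' to a different complex manifold, and ensuring no entropy is lost under the Cheeger--Gromov limit so that the limiting soliton really sits in class $2\pi c_1(M)$ on the original $(M,J)$. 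The spectral step is also delicate because of the nontrivial kernel arising from holomorphic vector fields, which must be factored out via the $\sigma$-modification before the exponential estimate can be applied.
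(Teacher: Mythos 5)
Your argument breaks down at the very first step: the extraction of a smooth subsequential limit. You write that ``Perelman's bounds together with Hamilton's compactness theorem extract a subsequence along which the parabolically translated flows converge smoothly,'' but Perelman's a priori estimates for the normalized K\"ahler-Ricci flow on a Fano manifold (Lemma 4.1 of the paper) give only a diameter bound, $\kappa$-noncollapsing, and $C^0$, gradient and Laplacian bounds on the Ricci potential --- hence a scalar curvature bound, but \emph{no} bound on the full Riemann curvature tensor. Hamilton--Cheeger--Gromov compactness therefore does not apply to an arbitrary flow line, and obtaining such curvature bounds for a general initial metric is precisely the hard point that cannot be assumed. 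This is why the paper does not argue directly on a single flow line: it runs a continuity method in the \emph{initial data}, joining $g$ to $g_{KE}$ by the segment $\omega_s=\omega_{g_{KE}}+s\sqrt{-1}\partial\overline\partial\phi$ and showing the set $I$ of parameters for which the flow converges is open and closed. Openness comes from a stability theorem (Proposition 2.1, proved via the $W$-functional and the Chen--Sun uniqueness theorem), and in both the openness and closedness arguments compactness is only ever invoked for evolved metrics already known to be uniformly $C^3$-close to $g_{KE}$ --- which is exactly what supplies the missing curvature bounds (via Shi's estimates) that your direct approach lacks.

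Two further steps are asserted without justification. First, you claim the limiting soliton's entropy equals $\mu(g_{KE})$; monotonicity only gives $\mu_\infty\le\mu(g_{KE})$, and the equality $\lim_t\lambda(g_t)=\lambda(g_{KE})$ is a nontrivial theorem (Corollary 4.6 in the paper), proved through the asymptotic estimates on the Ricci potentials $h_t$ and the minimizers $f_t$ of the $W$-functional in Section 4, which in turn require the Bando--Mabuchi lower bound on the K-energy. Second, the possible jump of the complex structure in the limit, which you flag as an ``expected obstacle,'' is not something that can be left open: the limit is a priori a soliton only with respect to some $J_\infty$, and the uniqueness of K\"ahler--Ricci solitons on $(M,J)$ says nothing about it. The paper closes this gap by combining Lemma 3.4 (entropy rigidity forces the limit to be K\"ahler--Einstein with respect to $J_\infty$) with the Chen--Sun uniqueness theorem (Theorem 5.1), which identifies $(g_\infty,J_\infty)$ with $(g_{KE},J)$ up to diffeomorphism. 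Your concluding Lojasiewicz--Simon step for exponential convergence is a reasonable alternative to the paper's route via the Phong--Song estimate, but it cannot rescue the earlier gaps.
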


Theorem \ref{th-1} was first announced by G. Perelman when he
was visiting MIT in the Spring of 2003. In [TZ3], we gave a proof of
Theorem \ref{th-1} by using an inequality of Moser-Trudinger type
established in [T1] and [TZ1]\footnote{We need to add more details about
how to use the Moser-Trudinger typed inequality in the case that $M$ has non-trivial
holomorphic fields, .}. The main purpose
of this paper is to give an alternative proof of this theorem. This
new proof does not use the inequality of Moser-Trudinger type.
Moreover, our proof also yields a generalization as follows:

\begin{theo}
\label{th-2} Let $(M,J)$ be  a compact K\"ahler manifold with
positive first Chern class $ c_1(M)>0$. Suppose that there exists  a
K\"ahler metric $g_0$  with K\"ahler class $2\pi c_1(M)$ such that
K\"ahler-Ricci flow  with the initial metric $g_0$ converges to a
K\"ahler-Einstein metric $g_{KE}$ in the $C^\infty$-topology, which
may not be compatible with $J$. Then  for any initial K\"ahler
metric $g$  with  K\"ahler class $ 2\pi c_1(M)$, the K\"ahler-Ricci
flow converges to $g_{KE}$  in the $C^\infty$-topology.
\end{theo}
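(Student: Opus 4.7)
\medskip
The plan is to study the asymptotic behavior of Perelman's $\mu$-invariant $\mu_\infty(g):=\lim_{t\to\infty}\mu(g(t))$ along the K\"ahler-Ricci flow, and to show that $\mu_\infty(g)$ is independent of the initial metric $g\in 2\pi c_1(M)$ and equal to $\mu(g_{KE})$. Combined with Perelman's fundamental estimates (uniform bounds on scalar curvature, diameter, and the normalized Ricci potential) and Hamilton's compactness theorem, this will force every subsequential limit of the flow to be isometric to $g_{KE}$, which then upgrades to full $C^\infty$ convergence.

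From the assumed $C^\infty$ convergence $g_0(t)\to g_{KE}$ together with continuity and monotonicity of $\mu$ along the K\"ahler-Ricci flow, one obtains $\mu_\infty(g_0)=\mu(g_{KE})$. Since critical points of $\mu$ are shrinking Ricci solitons, and $\mu(g_{KE})$ attains the supremum of $\mu$ over shrinking K\"ahler-Ricci solitons in $2\pi c_1(M)$, one gets a rigidity statement: any such soliton with $\mu=\mu(g_{KE})$ is K\"ahler-Einstein. For an arbitrary initial metric $g$, Perelman's estimates together with Hamilton's compactness yield, along any sequence $t_i\to\infty$, a subsequential smooth limit $(M,\rho_{t_i}^\ast J,\rho_{t_i}^\ast g(t_i))\to(M_\infty,J_\infty,g_\infty)$ for suitable diffeomorphisms $\rho_{t_i}$, with $g_\infty$ a shrinking K\"ahler-Ricci soliton. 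Once it is known that $\mu_\infty(g)=\mu(g_{KE})$, the rigidity above forces $g_\infty$ to be K\"ahler-Einstein, and a Bando-Mabuchi-type uniqueness on $(M_\infty,J_\infty)$ identifies it with $(M,J',g_{KE})$ up to biholomorphism.

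To establish the equality $\mu_\infty(g)=\mu_\infty(g_0)$ without invoking a Moser-Trudinger-type inequality (as was done in [TZ3]), I would proceed by a continuity argument: connect $g_0$ and $g$ by a smooth path $\{g_s\}_{s\in[0,1]}\subset 2\pi c_1(M)$, consider the associated K\"ahler-Ricci flows $g_s(t)$, and combine short-time continuous dependence of the flow on initial data with Perelman's uniform long-time estimates to show that $s\mapsto\mu_\infty(g_s)$ is locally constant. Once subsequential convergence of $g(t)$ to $g_{KE}$ is settled, full $C^\infty$ convergence follows from uniqueness of the limit, and exponential convergence modulo holomorphisms follows from a standard spectral argument for the linearization of the flow around the soliton. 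The main obstacle is implementing this continuity argument rigorously while simultaneously tracking how the limiting complex structure $J_\infty$ varies with the initial metric: in the setting of Theorem~\ref{th-2}, the limit complex structure $J'$ of $g_{KE}$ need not coincide with $J$, so the asymptotic identification $(M,\rho_t^\ast J,\rho_t^\ast g(t))\to(M,J',g_{KE})$ must be made for every initial $g$, and this cannot be read off from a naive perturbation of $g_0$.
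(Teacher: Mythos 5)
Your overall architecture matches the paper's: a continuity method along a path of initial K\"ahler forms in $2\pi c_1(M)$, Perelman's entropy and its monotonicity, subsequential soliton limits via compactness, a rigidity statement identifying maximal-entropy solitons as K\"ahler--Einstein, and a uniqueness theorem across complex structures (the paper uses Chen--Sun rather than Bando--Mabuchi, which matters since $J_\infty$ need not equal $J'$). However, there is a genuine gap at the step you yourself flag as the ``main obstacle'': showing that $\mu_\infty(g_s)=\mu(g_{KE})$ for every $s$. Your proposed mechanism --- short-time continuous dependence on initial data plus Perelman's uniform long-time estimates implies $s\mapsto\mu_\infty(g_s)$ is locally constant --- does not work as stated. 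Monotonicity of the entropy together with finite-time stability only gives a one-sided estimate: if $\mu(g_{s_0}(T))$ is close to the supremum at some finite time $T$, then $\mu_\infty(g_s)$ is close to the supremum for $s$ near $s_0$. This yields openness of the set where the limiting entropy is maximal, but it cannot rule out that $\mu_\infty$ drops at the endpoint of the interval of convergence, which is exactly the closedness step where the theorem could fail.

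The paper closes this gap with an ingredient absent from your proposal: the lower bound of Mabuchi's K-energy on $(M,J)$. The hypothesis of the theorem (existence of one flow converging to a K\"ahler--Einstein metric for some complex structure) implies, via Szekelyhidi's observation and Chen's theorem on the lower bound of the K-energy along a degeneration (Lemma 7.1), that the K-energy is bounded below on the space of K\"ahler potentials on $(M,J)$. From this, the asymptotic estimates $\|h_t\|_{C^0}\to 0$ for the Ricci potentials (Proposition 4.2) and the decay of $\Delta f_t$, $\nabla f_t$ and $\int_M f_t e^{-f_t}\omega_{g_t}^n$ for the minimizers of the $W$-functional (Proposition 4.4, which in turn needs the uniform $C^0$ bound on $f_t$ from the Appendix) show that the energy level satisfies $L(g)=(2\pi)^{-n}nV=\sup\lambda$ for \emph{every} initial metric $g$ in the class, with no continuity argument in $s$ required for the energy level at all. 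The continuity in $s$ is then only used at a fixed finite time $T$ to propagate near-maximality of $\lambda$, which is legitimate. Without this K-energy input (or some substitute such as the Moser--Trudinger inequality the paper is deliberately avoiding), your argument does not establish $\mu_\infty(g)=\mu(g_{KE})$ for arbitrary $g$, and the rigidity and uniqueness steps have nothing to act on. A secondary remark: Theorem 1.2 does not assert exponential convergence, so the spectral argument you append is not needed here.
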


Theorem \ref{th-2} implies that the convergence of K\"ahler-Ricci
flow is independent of the choice of initial K\"ahler metrics with
K\"ahler class $ 2\pi c_1(M)$. The main idea in the proof of Theorem
1.1 and Theorem 1.2 comes from [TZ4], where we used Perelman's
$W$-functional to study the stability of K\"ahler-Ricci flow on a
Fano manifold. Recently Sun and Wang obtained a more general result
for the stability [SW]. Our proof depends on certain asymptotic
estimates for the minimizing functions (we call $f_t$-functions)
defined by the $W$-functional associated to evolved metrics $g_t$ of
K\"ahler-Ricci flow. Our estimates depend only on the lower bound of
Mabuchi's $K$-energy on the space of K\"ahler potentials on $(M,J)$
[Ma]. If $(M,J)$ is a  K\"ahler-Einstein manifold, Bando and Mabuchi
proved that the K-energy is bounded from below ([BM], also see
[DT]). Recently, in [Ch], Chen gives a generalization of this. In
fact, Chen's result will be used in the proof of Theorem 1.2.

The organization of this paper is as follows: In Section 2, we
outline an approach of proof to Theorem 1.1. In Section 3, we recall
the W-functional and discuss some of its applications to
K\"ahler-Ricci flow (cf. Proposition 3.3 and Lemma 3.4). In Section
4, we give some  estimates for  $f_t$-functions along the
K\"ahler-Ricci flow. Theorem 1.1 and Theorem 1.2 will be proved in
Section 6 and Section 7, respectively. In Section 5, a stability
result for K\"ahler-Ricci flow, Proposition 2.1, will be proved.
Section 8 is an appendix in which we give a $C^0$-estimate for
$f_t$-functions.

\noindent {\bf Acknowledgements.} Part of the  work for this paper was done when both authors were visiting the Simons Center for Geometry and Physics
at Stony Brook  in early 2011. The authors would like to thank people there for hospitality and excellent working condition.

\section {An approach of continuity method to K\"ahler-Ricci flow}

Let $(M,J)$ be an $n$-dimensional compact K\"ahler manifold with
positive first Chern class $c_1(M)>0$ and $g$ be a K\"ahler metric
with its K\"ahler class equal to $2\pi c_1(M)$.  We consider the
(normalized) K\"ahler-Ricci flow (abbreviated as KR-flow):
\begin{equation}\label{eq:krf1}
\frac{\partial g(t)}{\partial t}\,=\,-{\rm Ric}(g(t)) +
g(t),~~~g(0)=g.
\end{equation}
It was proved in [Ca] that \eqref{eq:krf1} has a global solution
$g_t=g(t)$ for all time $t>0$.
For simplicity, we denote by $(g_t;g)$ a solution of \eqref{eq:krf1} with initial metric $g$.

Now we assume that $(M,J)$ admits a K\"ahler-Einstein metric
$\omega_{KE}$ with K\"ahler class $2\pi c_1(M)$.

\begin{prop}
\label{prop-2.1} Let  $(M,J)$ be  a K\"ahler-Einstein
manifold with positive first Chern class $c_1(M)>0$ and $g_{KE}$ be
a K\"ahler-Einstein  metric  on $(M,J)$ with its K\"ahler form
$\omega_{g_{KE}}\in 2\pi c_1(M)$. Then there exists a number
$\delta>0$ which depends only on $g_{KE}$ such that for any K\"ahler metric $g$ on $(M,J)$
with K\"ahler form $\omega_g \in 2\pi c_1(M)$ satisfying
 \begin{align} \|g-g_{KE}\|_{C^3(M)}\le \delta,
 \end{align}
KR-flow $(g_t;g)$ converges to $g_{KE}$  globally in the $C^\infty$-topology.
\end{prop}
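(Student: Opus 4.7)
The plan is to combine a linear stability analysis at $g_{KE}$ with a bootstrap on the nonlinear terms, using monotonicity of Perelman's $W$-functional to control the neutral direction coming from holomorphic vector fields. Since the K\"ahler class is preserved, I would first write $\omega_{g_t}=\omega_{KE}+\sqrt{-1}\partial\bar\partial\varphi_t$, so that KR-flow reduces to the scalar equation
\[
\frac{\partial\varphi_t}{\partial t}=\log\frac{(\omega_{KE}+\sqrt{-1}\partial\bar\partial\varphi_t)^n}{\omega_{KE}^n}+\varphi_t-c_t
\]
for an appropriate normalizing constant $c_t$. Linearizing at $\varphi=0$ gives $\partial_t v=\Delta_{KE}v+v$. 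Spectrally, the zero mode of $-\Delta_{KE}$ is killed by the normalization; the eigenspace at eigenvalue $1$ is exactly the space of holomorphic potentials, by the Matsushima--Lichnerowicz identity on a positive K\"ahler--Einstein manifold, and is tangent to the $\Aut_0(M,J)$-orbit of $g_{KE}$, producing a neutral drift; all other eigenvalues $\lambda_i>1$ produce strict contraction at rate $\lambda_i-1$.

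Next I would run a contraction/bootstrap argument on the component of $\varphi_t$ orthogonal to the holomorphic-potential kernel. If $\|g-g_{KE}\|_{C^3}\le\delta$ is small enough, the linear decay on the high-frequency part together with Schauder-type parabolic estimates keep $\|\varphi_t\|_{C^{3,\alpha}}$ small on a maximal interval, and Perelman's uniform estimates along KR-flow (no-local-collapsing and uniform bounds on the Ricci potential, hence uniform $C^k$-bounds) promote this to $C^\infty$-bounds and extend the interval to $[0,\infty)$. Once the neutral direction is controlled, the spectral gap $\lambda_2-1>0$ delivers exponential decay in the $C^\infty$-topology.

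The main obstacle is precisely this control of the neutral direction. I would handle it using equivariance of KR-flow under $\Aut_0(M,J)$ together with Perelman's $\mu$-functional: introduce a time-dependent gauge $\sigma_t\in\Aut_0(M,J)$ chosen so that the gauged potential $\tilde\varphi_t$ associated to $\sigma_t^\ast\omega_{g_t}$ remains $L^2$-orthogonal to the kernel, so that $\tilde\varphi_t\to 0$ exponentially by the previous step. Perelman's monotonicity of $\mu$ along KR-flow, together with a Lojasiewicz--Simon-type inequality
\[
\mu(g_{KE})-\mu(g_t)\le C\,\|\nabla\mu(g_t)\|^2
\]
valid in a neighborhood of the $\Aut_0(M,J)$-orbit of $g_{KE}$, then forces $\mu(g_{KE})-\mu(g_t)$ to decay exponentially and $\int_0^\infty|\dot\sigma_t|\,dt<\infty$, so $\sigma_t\to\sigma_\infty\in\Aut_0(M,J)$ and $g_t$ converges in $C^\infty$ to a K\"ahler--Einstein metric in the orbit of $g_{KE}$, which after relabeling we identify with $g_{KE}$. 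This integrability of the drift is precisely the step where linearization alone is insufficient and Perelman's functional does genuine work.
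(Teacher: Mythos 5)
Your route is genuinely different from the one in the paper. The paper proves Proposition 2.1 by contradiction and compactness: if no such $\delta$ existed, one takes a sequence of initial metrics converging to $g_{KE}$ whose flows first escape an $\epsilon_0$-neighborhood at times $t_i$, extracts a Cheeger--Gromov limit $(g_\infty,J_\infty)$ at those times, uses the monotonicity of Perelman's $\lambda$ together with Proposition 3.3 (that $g_{KE}$ maximizes $\lambda$ among K\"ahler metrics with class $2\pi c_1(M)$) and Lemma 3.4 to conclude that $g_\infty$ is K\"ahler--Einstein for $J_\infty$, and then invokes the Chen--Sun uniqueness theorem (Theorem 5.1) to identify $(g_\infty,J_\infty)$ with $(g_{KE},J)$ up to diffeomorphism, contradicting the escape bound; global convergence then follows from uniqueness of sequential limits. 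No linearization, spectral gap, or Lojasiewicz inequality appears; the price is reliance on [CS] and the absence of any rate. Your argument is instead the linearization-plus-Lojasiewicz scheme of Sun and Wang, which the paper explicitly cites ([SW]) as the alternative proof of exactly this statement. Your spectral picture is correct (constants killed by the normalization, eigenvalue $1$ equal to the space of holomorphic potentials tangent to the $\Aut_0(M,J)$-orbit, strict contraction above), and gauge-fixing by $\Aut_0(M,J)$ with integrable drift is the right way to treat the neutral directions.

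The gap is that the Lojasiewicz--Simon inequality for Perelman's $\mu$ (equivalently $\lambda$) near $g_{KE}$ is asserted rather than proved, and it is the entire technical content of [SW], not a routine step. Two concrete issues: (i) $\mu(g)$ is defined through a minimizer $f_g$ of the $W$-functional solving a nonlinear elliptic equation, so one must establish analyticity of $g\mapsto\mu(g)$ near $g_{KE}$ and identify the kernel of its second variation on the relevant slice -- the paper itself notes in a footnote that even differentiability of $\lambda$ is not obvious; (ii) you state the inequality with the optimal exponent, $\mu(g_{KE})-\mu(g_t)\le C\,\|\nabla\mu(g_t)\|^2$, which requires the critical set to be a nondegenerate critical manifold (integrability of the kernel), something you do not verify and which can fail in general. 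For the convergence asserted in Proposition 2.1 a general Lojasiewicz exponent suffices, giving polynomial rather than exponential decay of $\mu(g_{KE})-\mu(g_t)$ and still $\int_0^\infty\|\partial_t g_t\|\,dt<\infty$, so the conclusion survives; but the exponential decay you claim does not follow as written. Either supply the Lojasiewicz inequality or cite [SW] for it explicitly, in which case your proof and the paper's stand as two genuinely different, externally supported arguments.
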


Proposition \ref{prop-2.1} is a special case of more general
stability theorem on K\"ahler-Ricci flow on a Fano manifold, which
was recently proved by Sun and Wang by using an inequality of
Lojasiewicz type for the K\"ahler-Ricci flow [SW]. We note that
Proposition \ref{prop-2.1} was also proved by the second named
author if the condition (2.2) is replaced by certain condition on
K\"ahler potentials [Zhu]. In this paper, for the readers'
convenience, we will include a proof of Proposition \ref{prop-2.1}
in Section 5.  Similar argument will be used in proving Theorem 1.2.
Here we denote by $\|g-g'\|_{C^{\ell}(M)}$ the   norm:
$$\|g-g'\|_{C^{\ell}(M)}\,=\,\inf_{\Phi}\{ |g-\Phi^*(g')|_{C^{\ell}(M)}\},$$
where $\Phi$ runs over all diffeomorphisms of $M$ and
$|g_1-g_2|_{C^{\ell}(M)}$ denotes the $C^\ell$-norm between two
tensors $g_1$ and $g_2$ on $M$. The convergence of Riemannian
metrics in the $C^\infty$-topology means the convergence in all
$C^{\ell}$-norms $\|\cdot\|_{C^{\ell}(M)}$.

 We can write
$$\omega_g=\omega_\phi=\omega_{g_{KE}}+\sqrt{-1}\partial\overline\partial\phi\in 2\pi c_1(M)$$
for a K\"ahler potential $\phi$ on $(M,J)$ and we define a path of
K\"ahler forms
$$\omega_s=\omega_{g_{KE}}+s\sqrt{-1}\partial\overline\partial\phi.$$
Set
\begin{align} I=\{& s \in ~[0,1]~|~ (g_t^s; \omega_s)~\text{converges to}~g_{KE}~\text{in the}~C^\infty-\text{topology} \}.\notag
\end{align}
Clearly, it follows from Proposition \ref{prop-2.1} that $I$ is not empty.
We want to show that $I$ is both open and closed. It follows that $I=[0,1]$. Theorem 1.1 will follow from this.

The openness of $I$ follows from the following corollary of Proposition \ref{prop-2.1}:

\begin{cor}
\label{cor-1} Let $(M,J,\omega_{g_{KE}})$ be a K\"ahler-Einstein
manifold with $\omega_{g_{KE}}\in 2\pi c_1(M)>0$. Suppose that there
exists a K\"ahler metric $g_0$ on $(M,J)$ with K\"ahler class $2\pi
c_1(M)$ such that KR-flow $(g_t;g_0)$ converges to  $g_{KE}$ in the
$C^\infty$-topology. Then there exists a $\delta>0$ which depends
only on $g_{KE}$ and $ g_0$ such that for any K\"ahler metric $g$ on
$(M,J)$ with K\"ahler class $2\pi c_1(M)$ satisfying
\begin{align} \|g-g_0\|_{C^{3}(M)}\le \delta,
\end{align}
KR-flow $(g_t;g)$ converges to $g_{KE}$ in the $C^\infty$-topology.
\end{cor}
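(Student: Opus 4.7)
The plan is to combine Proposition \ref{prop-2.1} (local stability around $g_{KE}$) with continuous dependence of the K\"ahler-Ricci flow on its initial data over a finite time interval. First I would apply Proposition \ref{prop-2.1} to $g_{KE}$ to obtain a stability radius $\delta_0>0$. Since by assumption $(g_t;g_0)$ converges to $g_{KE}$ in the $C^\infty$-topology, I can then fix a time $T$ large enough that
\[
\|g_T-g_{KE}\|_{C^3(M)}<\frac{\delta_0}{2}.
\]
The goal is then to show that for any K\"ahler metric $g$ with $\omega_g\in 2\pi c_1(M)$ sufficiently close to $g_0$ in the $C^3$-norm, the corresponding flow $(g'_t;g)$ at time $T$ also sits within $\delta_0$ of $g_{KE}$; Proposition \ref{prop-2.1} applied with initial metric $g'_T$ then finishes the argument, since the flow starting from $g'_T$ coincides with the time-shifted flow $t\mapsto g'_{t+T}$ by uniqueness for \eqref{eq:krf1}.

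The key analytic input is continuous dependence of the normalized K\"ahler-Ricci flow on initial data in $C^3$ over the compact interval $[0,T]$. This is standard for quasilinear parabolic systems: writing \eqref{eq:krf1} as a parabolic Monge--Amp\`ere equation on K\"ahler potentials and invoking Schauder estimates (or equivalently using Shi-type estimates on the difference of two solutions of \eqref{eq:krf1}) yields, for every $\varepsilon>0$, a number $\eta=\eta(T,g_0,\varepsilon)>0$ such that every K\"ahler metric $g$ on $(M,J)$ with $\omega_g\in 2\pi c_1(M)$ and $\|g-g_0\|_{C^3(M)}\le \eta$ produces a solution with $\|g'_T-g_T\|_{C^3(M)}<\varepsilon$. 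Taking $\varepsilon=\delta_0/2$ and $\delta:=\eta$, the triangle inequality yields $\|g'_T-g_{KE}\|_{C^3(M)}<\delta_0$, which is exactly the hypothesis needed to invoke Proposition \ref{prop-2.1} starting at time $T$.

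The main obstacle lies in formulating the continuous-dependence step rigorously rather than in any conceptual novelty. One has to handle the fact that $\|\cdot\|_{C^\ell(M)}$ is defined by an infimum over diffeomorphisms: choosing a diffeomorphism $\Phi$ that (approximately) realizes the infimum for $g$ against $g_0$ and replacing $g$ by $\Phi^*g$ reduces the problem to continuous dependence of \eqref{eq:krf1} in the ordinary $C^3$-norm, which in turn follows from standard parabolic theory applied to the linearization of \eqref{eq:krf1} along the reference flow $(g_t;g_0)$. The constants in this estimate degenerate as $T\to\infty$, but this is harmless because $T$ is chosen and fixed before $\delta$, so only a finite-time estimate is required.
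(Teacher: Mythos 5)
Your proposal is correct and follows essentially the same route as the paper: choose $T$ with $\|\tilde g_T-g_{KE}\|_{C^3(M)}<\delta_0/2$, use finite-time stability of the flow to get $\|g_T-\tilde g_T\|_{C^3(M)}<\delta_0/2$, and conclude via the triangle inequality and Proposition \ref{prop-2.1}. The only difference is that you spell out the continuous-dependence step (and the diffeomorphism-infimum issue in the norm), which the paper simply asserts as standard.
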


\begin{proof} By the assumption, we can choose $T$ sufficiently large such that
if $\tilde g_t$ is the solution of \eqref{eq:krf1} with initial metric $g_0$, then
$$\| \tilde g_T-g_{KE}\|_{C^{3}(M)}< \frac{\delta}{2},$$
where $\delta$ is a small number determined in Proposition \ref{prop-2.1}.
Since the K\"ahler-Ricci flow is stable for any fixed finite time, there is a small $\epsilon >0$ such that whenever
$\| g-g_0\|_{C^{3}(M)} < \epsilon$, we have
$$\| g_T - \tilde g_T\|_{C^{3}(M)}\,<\, \frac{\delta}{2}.$$
Hence, we have
\begin{align}\| g_T - g_{KE}\|_{C^3(M)}\,<\, \delta.\end{align}
Then the flow $(g_t; g_T)$ with initial $g_T$ will
converge to $g_{KE}$ in the $C^\infty$-topology according to Proposition \ref{prop-2.1}. This proves the corollary.
\end{proof}

It remains to prove

\begin{theo}\label{th-3}
$I$ is closed.
\end{theo}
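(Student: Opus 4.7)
Given a sequence $\{s_k\}\subset I$ with $s_k\to s_\infty\in[0,1]$, my plan is to show that the KR-flow $(g_t^{s_\infty};\omega_{s_\infty})$ converges in $C^\infty$ to $g_{KE}$ by three steps: extract a subsequential smooth limit at $t=\infty$ using Perelman's universal estimates; identify that limit with $g_{KE}$ via the $W$-functional analysis of Sections 3 and 4; and then upgrade subsequential $C^3$-closeness to full $C^\infty$-convergence via the stability statement of Proposition \ref{prop-2.1}. Note that the hypothesis $s_k\in I$ plays no direct analytic role in my argument; it enters only to ensure $s_\infty$ lies in the closure of $I$ inside $[0,1]$, and the three steps apply to any initial Kähler metric in $2\pi c_1(M)$.

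\textbf{Compactness.} Perelman's a priori estimates for KR-flow on a Fano manifold---uniform bounds on scalar curvature, diameter, and $\kappa$-noncollapsing, together with the resulting higher-order Shi-type estimates---hold uniformly in $t$ for the flow $(g_t^{s_\infty};\omega_{s_\infty})$. Hamilton's compactness theorem then produces, for every sequence $t_i\to\infty$, a subsequence along which $(M,g_{t_i}^{s_\infty})$ converges in $C^\infty$ modulo diffeomorphisms to a smooth Kähler metric $\omega_\infty$ on $M$.

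\textbf{Identifying the limit.} Since $(M,J)$ admits a Kähler-Einstein metric, Bando-Mabuchi [BM] gives a lower bound on Mabuchi's K-energy on $(M,J)$. Combined with the monotonicity of Perelman's $\mu$-entropy along KR-flow, this makes the total $\mu$-drop finite, so $\tfrac{d\mu}{dt}\to 0$ along some subsequence $t_i\to\infty$. Using the asymptotic control on the $f_t$-minimizers developed in Section 4 (cf.\ Proposition 3.3 and Lemma 3.4) together with the $C^0$-estimate of the appendix, one concludes that $\omega_\infty$ satisfies the shrinking Kähler-Ricci soliton equation; because a Kähler-Einstein metric exists on $(M,J)$, this soliton must in fact be Kähler-Einstein, and by Bando-Mabuchi uniqueness $\omega_\infty=\Phi^*\omega_{g_{KE}}$ for some diffeomorphism $\Phi$. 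In the infimum-over-diffeomorphisms norm of Section 2, this reads $\|g_{t_i}^{s_\infty}-g_{KE}\|_{C^3(M)}\to 0$.

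\textbf{Stability bootstrap and main obstacle.} Choose $t_*=t_i$ large enough that $\|g_{t_*}^{s_\infty}-g_{KE}\|_{C^3(M)}<\delta$, where $\delta$ is the stability radius furnished by Proposition \ref{prop-2.1}. Applying that proposition with initial data $g_{t_*}^{s_\infty}$ yields $C^\infty$-convergence to $g_{KE}$ of the KR-flow starting from $g_{t_*}^{s_\infty}$; by uniqueness of solutions to \eqref{eq:krf1}, that flow coincides with the time-translate $g_{t_*+t}^{s_\infty}$, so $g_t^{s_\infty}\to g_{KE}$ in $C^\infty$ and $s_\infty\in I$. The main obstacle is the identification step: forcing $\omega_\infty$ to solve the soliton equation requires quantitative control of the $f_t$ along the flow and of the defect $|\mathrm{Ric}(g_t)+\nabla^2 f_t - g_t|$, which is exactly what the $W$-functional calculations of Section 3, the $f_t$-estimates of Section 4, and the appendix $C^0$-bound are designed to supply. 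Once Steps 1--2 exhibit one finite time at which $g_{t_*}^{s_\infty}$ is $C^3$-close to $g_{KE}$, the stability Proposition \ref{prop-2.1} closes the argument routinely.
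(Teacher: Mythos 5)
Your argument has a genuine gap in the compactness step, and it is precisely the gap that the paper's continuity-method structure is designed to avoid. You claim that Perelman's estimates together with Shi-type estimates yield, for the flow started at $\omega_{s_\infty}$, smooth subsequential convergence modulo diffeomorphisms via Hamilton's compactness theorem. But Perelman's estimates (Lemma 4.1) control only the scalar curvature, the diameter, the non-collapsing constant and the Ricci potential; they do \emph{not} bound the full Riemann curvature tensor along the flow, and without a uniform bound on $|\mathrm{Rm}(g_t)|$ neither Shi's derivative estimates nor Hamilton/Cheeger--Gromov compactness can be invoked. Obtaining such curvature bounds for an arbitrary initial metric is essentially as hard as the theorem being proved: if your Step 1 were valid for every initial K\"ahler metric in $2\pi c_1(M)$, Theorem 1.1 would follow directly and the continuity path, Proposition 2.1 and Theorem 2.3 would all be superfluous. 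Your own remark that the hypothesis $s_k\in I$ ``plays no direct analytic role'' is the symptom of this gap.

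The paper uses $s<\tau_0$ (i.e.\ $s\in I$) in an essential way: since each flow $(g_t^s;\omega_s)$ with $s<\tau_0$ is already known to converge to $g_{KE}$, its metrics eventually enter, and (if the uniform estimate (6.1) failed) would have to re-exit, a fixed $C^3$-neighborhood of $g_{KE}$; choosing the sequence $g^{s_i}_{t_i}$ inside the annular region $\delta_0\le\|\,\cdot\,-g_{KE}\|_{C^{3}(M)}\le 2\delta_0$ gives uniform curvature bounds \emph{by closeness to $g_{KE}$}, which is what legitimizes the Cheeger--Gromov limit $(g_\infty,J_\infty)$. The entropy comparison (Corollary 4.6 applied at $s=\tau_0$, transferred to nearby $s$ by finite-time stability and monotonicity of $\lambda$) then forces $\lambda(g_\infty)=\lambda(g_{KE})$, Lemma 3.4 makes $g_\infty$ K\"ahler--Einstein for $J_\infty$, and Theorem 5.1 (Chen--Sun) --- not Bando--Mabuchi, since the limit complex structure $J_\infty$ may differ from $J$ --- yields the contradiction. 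Only after this uniform-in-$s$ estimate is in place does one pass to $s=\tau_0$ and extract a convergent subsequence there. Your final stability bootstrap is fine once a single time with $\|g^{s_\infty}_{t_*}-g_{KE}\|_{C^{3}(M)}<\delta$ has been produced, but as written you have not produced it.
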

This will be proved in Section 6.

\section{Perelman's W-functional}

In this section, we review  Perelman's $W$-functional in [Pe]. The
$W$-functional is defined for triples $(g,f,\tau)$ on a given closed
manifold $M$ of dimension $m$, where $g$ is a Riemannian metric, $f$
is a smooth function and $\tau$ is a constant. It is defined as
follows:
\begin{align} W(g,f,\tau)=(4\pi\tau)^{-m}\int_M[\tau(R(g)+ |\nabla f|^2)+f-m]
e^{-f}dV_g,\end{align} where $R(g)$ denotes the scalar curvature of
$g$. We also normalize the triple $(g,f,\tau)$ by
\begin{equation}
\label{norm-1} (4\pi\tau)^{-m}\int_M e^{-f} dV_g\,=\, (4\pi\tau)^{-m} V.
\end{equation}
In our situation, since $M$ is a compact K\"ahler manifold of complex dimension $n$,
we further have the following normalization of the volume of $g$:
\begin{equation}
\label{vol-1} \int_M dV_g \,=\,(2\pi)^n\int_M c_1(M)^n\,\equiv\, V.
\end{equation}
Note that $m=2n$. Then the $W$-functional depends only on a pair $(g,f)$ and can be
reexpressed as follows:
\begin{align}
\label{w-fun-1}
W(g,f)=&\,W(g,f,\frac{1}{2}) + (2\pi)^{-m}m V \,\notag\\
=&\, (2\pi)^{-m}\int_M[\frac{1}{2}(R(g)+ |\nabla f|^2)+f]
e^{-f}dV_g,\end{align} where $(g,f)$ satisfies \eqref{norm-1} with
$\tau= \frac{1}{2}$ and \eqref{vol-1}. Note that \eqref{vol-1} holds automatically for certain geometric
problems, for example, metrics which evolve along a normalized Ricci
flow.

Following Perelman, we define an entropy $\lambda(g)$ by
$$\lambda(g)=\inf_f \{W(g,f)|~ f~\text{satisfies}~\eqref{norm-1}\}.$$
It is well-known that $\lambda(g)$ can be attained by some $f$ (cf.
[Ro]). In fact, such a $f$ satisfies the Euler-Lagrange equation of
$W(g,\cdot)$,
\begin{equation}
\label{w-equ-1} \triangle f + f + \frac{1}{2} (R - |\nabla
f|^2)\,=\,\lambda(g).
\end{equation}
Following a computation in [Pe], we can deduce the first
variation\footnote{It is not obvious that $\lambda(\cdot)$ is
differentiable. This can be proved by using the comparison principle
and the regularity results we have on \eqref{w-equ-1} (see Lemma 3.5
below and Appendix  in Section 8). Here we just compute its first
variation at those $g$ where $\lambda $ is smooth. This is the case
when the minimizer $f$ is unique. It is sufficient for us in this
paper.} of $\lambda(\cdot)$,
\begin{equation}
\label{w-equ-2} \delta\lambda(g)\,=\,-\,\int_M <\delta g,
\text{Ric}(g)-g+ \nabla^2f> e^{-f} dV_g,
\end{equation}
where $\text{Ric}(g)$ denotes the Ricci tensor of $g$ and $\nabla^2f$ is
the Hessian of $f$. It follows from \eqref{w-equ-2} that $g$ is a
critical point of $\lambda(\cdot)$ if and only if $g$ is a gradient
shrinking Ricci soliton, namely, $g$ satisfies
\begin{equation}
\label{w-equ-3}\text{Ric}(g)\, + \,\nabla^2f \,= \,g,
\end{equation}
where $f$ is a minimizer of $W(g,\cdot)$.

Next we will prove the uniqueness of solutions for \eqref{w-equ-1}
if $g$ is a gradient shrinking Ricci soliton.

\begin{lem}
\label{unique-1} If  $g$ satisfies \eqref{w-equ-3}, then any solution of
\eqref{w-equ-1} is equal to the function $f$ in \eqref{w-equ-3}
modulo a constant. Consequently, the minimizer of $W(g,\cdot)$ is
unique if the metric $g$ is a gradient shrinking Ricci soliton.
\end{lem}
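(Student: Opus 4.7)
My plan is to break the proof into two parts: first, I verify that the soliton potential $f_0$ itself satisfies (3.5) for a suitable choice of its additive constant; second, I show that any solution $f$ of (3.5) must also satisfy the soliton equation (3.7). The lemma then follows at once by subtraction.

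For the first part, I take the divergence of (3.7) and combine the twice-contracted Bianchi identity $\nabla^i R_{ij} = \tfrac12 \nabla_j R$ with the standard commutator $\nabla^i \nabla_i \nabla_j f_0 = \nabla_j \Delta f_0 + R_{jk}\nabla^k f_0$. Substituting (3.7) again to replace $R_{jk}\nabla^k f_0$ by $\nabla_j f_0 - \tfrac12 \nabla_j |\nabla f_0|^2$, and regrouping, yields
\[
\nabla_j \Bigl( \Delta f_0 + f_0 + \tfrac12 R - \tfrac12 |\nabla f_0|^2 \Bigr) = 0
\]
on $M$. Hence the LHS of (3.5) applied to $f_0$ is a constant; shifting $f_0$ by a real constant, I can make this constant equal to $\lambda(g)$, so $f_0$ is a solution of (3.5).

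For the second part, I first note that any $f$ satisfying (3.5) is automatically a minimizer of $W(g, \cdot)$: using the Euler-Lagrange equation to substitute for $\tfrac12 R$ in the integrand and applying the identity $\int_M \Delta f \, e^{-f} dV_g = \int_M |\nabla f|^2 e^{-f} dV_g$ (integration by parts) collapses $W(g, f)$ to $(2\pi)^{-m} \lambda(g) V$, which is the defining infimum. Since $g$ satisfies (3.7), it is a critical point of $\lambda(\cdot)$, so the first variation formula (3.6) applied at this minimizer $f$ reads
\[
0 \,=\, \delta\lambda(g) \,=\, -\int_M \bigl\langle \delta g, \, \mathrm{Ric}(g) - g + \nabla^2 f \bigr\rangle\, e^{-f} dV_g
\]
for every symmetric $2$-tensor $\delta g$. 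Localizing $\delta g$ in a coordinate chart in an arbitrary pointwise tensor direction then forces $\mathrm{Ric}(g) + \nabla^2 f = g$; in other words, $f$ is itself a soliton potential. Subtracting the soliton equations for $f$ and $f_0$ gives $\nabla^2(f - f_0) \equiv 0$, and tracing yields $\Delta(f - f_0) = 0$, so on the closed manifold $M$ the difference $f - f_0$ must be constant.

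The hard part will be justifying the application of (3.6): as the footnote warns, (3.6) was derived assuming $\lambda$ is differentiable at $g$, a property equivalent to uniqueness of the minimizer -- precisely the conclusion I am trying to prove. I would avoid the apparent circularity by never differentiating $\lambda$ directly: for any fixed minimizer $f$ and any variation $\delta g$, the one-variable function $s \mapsto W(g + s \delta g, f)$ is manifestly smooth, and the a priori bounds $\lambda(g + s\delta g) \le W(g + s\delta g, f)$ and $\lambda(g + s\delta g) \le W(g + s\delta g, f_0) = \lambda(g) + O(s^2)$ (the latter has no linear $s$-term because $f_0$ satisfies (3.7)), together with their $-\delta g$ counterparts, force the linear Taylor coefficient on the right-hand side -- which is the integral on the RHS of (3.6) -- to vanish for every $\delta g$. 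Alternatively, one can invoke Perelman's monotonicity of $\lambda$ along Ricci flow: on a soliton $\lambda(g(t))$ is constant, and the vanishing of $\tfrac{d\lambda}{dt}$ -- a non-negative integrand involving $|\mathrm{Ric} + \nabla^2 f - g|^2 e^{-f}$ -- directly expresses the soliton equation for the minimizer $f$.
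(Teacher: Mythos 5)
Your overall architecture is reasonable, and two of its three pieces are fine: the divergence computation showing that the soliton potential $f_0$ satisfies (3.5) after adjusting its additive constant is exactly the paper's Lemma 3.2, and the endgame ($\nabla^2(f-f_0)\equiv 0$ on a closed manifold forces $f-f_0$ to be constant) is unobjectionable. The observation that every solution of (3.5) realizes the value $\lambda(g)$ of $W(g,\cdot)$, hence is a minimizer, is also correct.

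The gap is in the crux: your primary device for justifying $\mathrm{Ric}(g)-g+\nabla^2 f=0$ does not work. You propose to deduce the vanishing of the linear coefficient $A(\delta g)=-\int_M\langle\delta g,\mathrm{Ric}-g+\nabla^2 f\rangle e^{-f}dV_g$ from the two upper bounds $\lambda(g+s\delta g)\le W(g+s\delta g,f_s)=\lambda(g)+sA+O(s^2)$ and $\lambda(g+s\delta g)\le W(g+s\delta g,f_{0,s})=\lambda(g)+O(s^2)$, together with their $-\delta g$ counterparts. But these are all \emph{upper} bounds on an infimum, and they are simultaneously satisfiable with $A\ne 0$: the model $\lambda(s)=\min(as,0)$ obeys every inequality you list with $a\ne0$. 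In that scenario $f$ is the selected minimizer on one side of $s=0$ and $f_0$ on the other, $\lambda$ has a concave corner at $g$, and nothing is contradicted --- indeed a corner at a maximizer is exactly what Proposition 3.3 would lead one to expect a priori. What is missing is a matching \emph{lower} bound on $\lambda(g+s\delta g)$ to first order, and an infimum does not supply one for free. Your parenthetical ``alternative'' is in fact the only route that closes this, and it is precisely the paper's proof: pull back by the composed diffeomorphisms $\sigma_t\circ\rho_t$ generated by $\tfrac12\nabla f$ and $\tfrac12\nabla f'$, observe that $W\bigl((\sigma_t\circ\rho_t)^{\star}g,(\sigma_t\circ\rho_t)^{\star}f'\bigr)$ is constant by diffeomorphism invariance of $W$, and compute its $t$-derivative (only the metric variation survives because $f'$ is critical for $W(g,\cdot)$) as $\int_M|\mathrm{Ric}-\hat g_t+\nabla^2(\sigma_t\circ\rho_t)^{\star}f'|^2e^{-(\sigma_t\circ\rho_t)^{\star}f'}dV\ge 0$, forcing the integrand to vanish. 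As you wrote it, though, the alternative invokes ``the vanishing of $d\lambda/dt$'' with the minimizer plugged in, which is formula (3.6) again and reintroduces the very differentiability issue you set out to avoid; to be rigorous you must differentiate $W$ along the pulled-back pair as above (or run Perelman's two-time monotonicity via the conjugate heat equation) rather than differentiate $\lambda$ itself.
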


\begin{proof} Let $\sigma_t$ be an one-parameter subgroup generated
by the gradient field
$$X=\frac{1}{2}\nabla_g f=\frac{1}{2}g^{ij}f_j\frac{\partial}{\partial x_i}.$$
Put $\tilde g(t,\cdot)\,=\,\sigma_t^\star g$. Then $\tilde
g(t,\cdot)$ is a family of Ricci solitons and $\tilde g$ satisfies
the normalized Ricci flow,
$$\frac{\partial \tilde g}{\partial t}\,=\,L_{(\sigma_t^{-1})_\star X}\tilde g\,=\,-\text{Ric}(\tilde g)+\tilde g.$$

Let $f'$ be another solution of \eqref{w-equ-1} and $\rho_t$ be an
one-parameter subgroup generated by the gradient field
$$X'\,=\,\frac{1}{2} \nabla_{\tilde g} f'.$$
Set $g'\,=\,\rho_t^\star \tilde g (t,\cdot)$. Then $ g'$ satisfies a
modified Ricci flow:
\begin{align}
\frac{\partial g'}{\partial t}&\,=\,-\text{Ric}( g')+
g' - L_{(\rho_t^{-1})_\star X'} g'\notag\\
&\,=\,-\,\text{Ric}( g')+ g' - \nabla^2 \rho_t^\star f'.\notag\end{align}
Clearly, $(\sigma_t\circ\rho_t)^\star f'$ satisfies \eqref{w-equ-1}
with $\hat g_t\,:=\,(\sigma_t\circ\rho_t)^\star g$ in place of $g$.
Therefore, using the first variation of the functional $W$ and its
invariance under diffeomorphisms, we have
\begin{align} 0&\,=\,\frac{d W((\sigma_t\circ\rho_t)^\star g, (\sigma_t\circ\rho_t)^\star f')} {dt}\notag\\
&\,=\,-\int_M<\frac{ d \hat g_t }{dt}, \text{Ric}(\hat g_t)- \hat
g_t + \nabla^2 (\sigma_t\circ\rho_t)^\star f' >\,e^{-
(\sigma_t\circ\rho_t)^\star f' }
dV_{\hat g_t}\notag\\
&\,=\,\int_M | \text{Ric}(\hat g_t)- \hat g_t + \nabla^2
(\sigma_t\circ\rho_t)^\star f' |^2\,e^{-
(\sigma_t\circ\rho_t)^\star f'} dV_{\hat g_t}.\notag
\end{align}
It follows
$$\text{Ric}(g)- g+\nabla^2 f'=0.$$
Hence, $f'\,=\,f +\text{const.}$.
\end{proof}

As a converse to Lemma \ref{unique-1}, we have

\begin{lem}
\label{unique-2} Let $g$ be a gradient shrinking Ricci soliton and
$f$ be the function in \eqref{w-equ-3} for $g$. Then
$f$ satisfies \eqref{w-equ-1}.
\end{lem}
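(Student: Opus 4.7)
My plan is to verify the Euler--Lagrange equation \eqref{w-equ-1} for the soliton potential $f$ by combining the trace of \eqref{w-equ-3} with the standard shrinking-soliton conservation law $R + |\nabla f|^2 - 2f = \text{const}$, and then to match the resulting constant on the right-hand side with $\lambda(g)$ using the normalization \eqref{norm-1} together with the uniqueness established in Lemma \ref{unique-1}.

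Concretely, tracing \eqref{w-equ-3} gives $R + \Delta f = m$, where $m = 2n$ is the real dimension. Taking the divergence of \eqref{w-equ-3}, then invoking the contracted second Bianchi identity $\nabla^j R_{ij} = \tfrac{1}{2}\nabla_i R$ and the standard commutation formula $\nabla^j\nabla_i\nabla_j f = \nabla_i \Delta f + R_{ij}\nabla^j f$, and feeding back the trace relation, a routine computation produces Hamilton's identity
\[ R + |\nabla f|^2 - 2f \,=\, C \]
for some constant $C$. Substituting $\Delta f = m - R$ and $|\nabla f|^2 = 2f + C - R$ into the left-hand side of \eqref{w-equ-1} collapses the expression to the constant $m - C/2$, so $f$ solves an equation of the form $\Delta f + f + \tfrac{1}{2}(R - |\nabla f|^2) = c$ with $c = m - C/2$.

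The only delicate point, which I expect to be the main obstacle, is identifying this constant $c$ with the entropy value $\lambda(g)$. For this I would invoke the existence (by Rothaus [Ro], as already cited in the paper) of a minimizer $\tilde f$ of $W(g, \cdot)$ subject to \eqref{norm-1}; such a $\tilde f$ satisfies \eqref{w-equ-1} with $\lambda(g)$ on the right-hand side. By Lemma \ref{unique-1}, $\tilde f = f + a$ for some constant $a$. Imposing \eqref{norm-1} simultaneously on $f$ and $\tilde f$ gives $e^{-a} = 1$, forcing $a = 0$. Hence $f = \tilde f$ is itself the minimizer of $W(g, \cdot)$, so the constant on the right of the Euler--Lagrange equation is indeed $\lambda(g)$, completing the verification of \eqref{w-equ-1}.
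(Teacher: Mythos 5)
Your argument is essentially the paper's own proof: differentiate the soliton equation, use the contracted second Bianchi identity and the trace relation to obtain Hamilton's conservation law $R+|\nabla f|^2-2f=\mathrm{const}$, and substitute back to see that the left-hand side of \eqref{w-equ-1} is constant. The one place you go beyond the paper is in explicitly identifying that constant with $\lambda(g)$ via Rothaus' existence of a minimizer, Lemma \ref{unique-1}, and the normalization \eqref{norm-1} --- a step the paper leaves implicit --- and your handling of it is correct and non-circular, since Lemma \ref{unique-1} does not rely on this lemma.
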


\begin{proof} Differentiating \eqref{w-equ-3} and applying the second
Bianchi identity, we have
$$\nabla_iR\,=\,2 R_{ij}f_j.$$
It follows
$$\nabla_iR+2f_{ij}f_j-2f_i\,=\,2(R_{ij}+f_{ij}-g_{ij})f_j\,=\,0.$$
This implies
$$R+|\nabla f|^2-2f\equiv const,$$
and consequently, by using \eqref{w-equ-3}, we deduce that $f$ is a
solution of \eqref{w-equ-1}.
\end{proof}

\begin{prop}
\label{unique-3} Let $(M, J,g_{KE})$ be a K\"ahler-Einstein
manifold with K\"ahler class $2\pi c_1(M)$. Then $g_{KE}$ is a
global maximizer of $\lambda(\cdot)$ in the space of K\"ahler
metrics $(g,J')$ on $M$ with K\"ahler class $2\pi c_1(M)$.
\end{prop}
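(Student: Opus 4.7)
The idea is to evaluate $\lambda(g_{KE})$ exactly via an explicit constant minimizer, then observe that $W(g,0)$ is a cohomological invariant that takes the same numerical value for every K\"ahler metric $g$ in the class $2\pi c_1(M)$, and finally invoke the tautological infimum bound $\lambda(g) \le W(g,0)$. No Ricci-flow monotonicity or Moser--Trudinger input is required; everything is controlled by Lemmas \ref{unique-1}--\ref{unique-2} together with a Chern--Weil identity.

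First I would identify the minimizer at $g_{KE}$. Since $\text{Ric}(g_{KE}) = g_{KE}$, the soliton equation \eqref{w-equ-3} forces $\nabla^2 f \equiv 0$, so $f$ must be constant, and the normalization \eqref{norm-1} with $\tau = 1/2$ pins it down to $f \equiv 0$. Lemma \ref{unique-2} then says this $f$ satisfies the Euler--Lagrange equation \eqref{w-equ-1}, and Lemma \ref{unique-1} says every solution of \eqref{w-equ-1} agrees with it up to an additive constant; hence the (known-to-exist) minimizer of $W(g_{KE},\cdot)$ must be $f \equiv 0$, giving
$$\lambda(g_{KE}) \,=\, W(g_{KE}, 0) \,=\, (2\pi)^{-2n} \int_M \tfrac{1}{2} R(g_{KE})\, dV_{g_{KE}}.$$
Next, for any K\"ahler metric $g$ on $(M,J')$ with $[\omega_g] = 2\pi c_1(M)$, the pointwise identity $R\, dV_g = 2n\, \text{Ric}(g) \wedge \omega_g^{n-1}/(n-1)!$ together with $[\text{Ric}(g)] = [\omega_g] = 2\pi c_1(M)$ produces the cohomological equality $\int_M R(g)\, dV_g = 2nV$, while $\int dV_g = V$ by the very definition of $V$. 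In particular $f \equiv 0$ is admissible for $(g,J')$, so
$$\lambda(g) \,\le\, W(g, 0) \,=\, (2\pi)^{-2n}\, nV \,=\, \lambda(g_{KE}),$$
which is the claim.

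The only point demanding care is confirming in the first step that the constant $f \equiv 0$ actually \emph{realizes} the infimum at $g_{KE}$ (and is not merely a critical point); this is precisely what the uniqueness portion of Lemma \ref{unique-1} supplies, which is why the proposition is stated immediately after those two lemmas. Once this is in hand, the remainder is the tautology $\lambda(g) = \inf_f W(g,f) \le W(g,0)$ combined with the cohomological identity for $\int R\, dV$.
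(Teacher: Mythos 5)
Your proposal is correct and follows essentially the same route as the paper: identify the minimizer of $W(g_{KE},\cdot)$ as the constant $f\equiv 0$ via Lemma \ref{unique-1} and the normalization \eqref{norm-1}, then use the cohomological invariance of $W(g,0)=n(2\pi)^{-2n}V$ for all $g$ with $[\omega_g]=2\pi c_1(M)$ together with $\lambda(g)\le W(g,0)$. The only difference is that you spell out the soliton-equation step and the role of Lemma \ref{unique-2} more explicitly than the paper does.
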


\begin{proof} By Lemma \ref{unique-1}, the
minimizer $f$ of $W(g_{KE},\cdot)$ must be a constant. As a
consequence, $f=0$ because of the
normalization condition \eqref{norm-1}. Thus for any K\"ahler metric
$g$ with K\"ahler class $2\pi c_1(M)$, we
have
\begin{align} \lambda(g_{KE})&\,=\,W(g_{KE}, 0)\,=\,W(g,0)\notag\\
&\,\ge\,\inf \,\{ W(g,f) ~|~\int_M e^{-f} dV_g \,=\,V\}
\,= \,\lambda(g)\notag.
\end{align}
Here we have used the fact:
\begin{align} &W(g,0)=\,n (2\pi)^{-2n}
 \,\int_M \text{Ric} (g)\wedge \omega_{g}^{n-1}\notag\\
 &=\,n(2\pi)^{-2n} \,\int_M \text{Ric} (g_{KE})\wedge \omega_{g_{KE}}^{n-1}\,=\,n (2\pi)^{-2n} V.
 \notag
\end{align}

\end{proof}

The following is a direct corollary of Proposition 3.3 by
using the K\"ahler-Ricci flow.

\begin {lem} Let $(J_0,g_{KE})$ and $ (J, g)$ be a
K\"ahler-Einstein metric and a K\"ahler metric on $M$ with $2\pi c_1(M)$ as both
K\"ahler classes. Suppose that
$\lambda(g)\ge\lambda(g_{KE})$. Then $g$ is a K\"ahler-Einstein
metric with respect to the complex structure $J$.
\end{lem}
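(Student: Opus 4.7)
The plan is to combine Proposition 3.3 with Perelman's monotonicity of $\lambda$ along the K\"ahler-Ricci flow and the uniqueness statement of Lemma 3.1.

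First, Proposition 3.3 applied to $(M, J_0, g_{KE})$ gives $\lambda(h) \leq \lambda(g_{KE})$ for every K\"ahler metric $h$ on $M$ with K\"ahler class $2\pi c_1(M)$; combined with the hypothesis this forces $\lambda(g) = \lambda(g_{KE})$. The computation at the end of the proof of Proposition 3.3 further shows that the value $W(h, 0) = n(2\pi)^{-2n} V$ is purely cohomological, so in particular $\lambda(g) = W(g, 0)$. Consequently $f \equiv 0$ attains the infimum defining $\lambda(g)$ and is a solution of the Euler-Lagrange equation \eqref{w-equ-1}.

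Next, I would run the K\"ahler-Ricci flow $(g_t; g)$ on $(M, J)$. The K\"ahler class $2\pi c_1(M)$ is preserved by the flow, so Proposition 3.3 gives $\lambda(g_t) \leq \lambda(g_{KE})$, while Perelman's monotonicity gives $\lambda(g_t) \geq \lambda(g_0) = \lambda(g_{KE})$; hence $\lambda(g_t) \equiv \lambda(g_{KE})$ for all $t \geq 0$. Applying \eqref{w-equ-2} to the diffeomorphism-equivalent modification $\dot h_t = -\text{Ric}(h_t) + h_t - \nabla^2 f_t$ of the flow yields the standard monotonicity formula
\[
0 \,\equiv\, \frac{d}{dt}\lambda(g_t) \,=\, \int_M \bigl|\text{Ric}(g_t) - g_t + \nabla^2 f_t\bigr|^2\, e^{-f_t}\, dV_{g_t},
\]
which forces $\text{Ric}(g_t) + \nabla^2 f_t = g_t$ for all $t \geq 0$. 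In particular, $g$ itself is a gradient shrinking Ricci soliton on $(M, J)$.

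Finally, Lemma 3.1 applied to the soliton $g$ says that every solution of \eqref{w-equ-1} for $g$ agrees with $f_0$ up to an additive constant. Since $f \equiv 0$ is such a solution by the first step, $f_0$ must itself be constant, and the normalization \eqref{norm-1} forces this constant to be $0$. Plugging $f_0 \equiv 0$ into the soliton equation collapses it to $\text{Ric}(g) = g$, so $g$ is K\"ahler-Einstein with respect to $J$. The main technical obstacle is the rigorous justification of the monotonicity formula, since $\lambda$ is a priori only Lipschitz in $t$ and the minimizer $f_t$ need not depend smoothly on $t$; this is handled by the regularity results referenced in the footnote after \eqref{w-equ-2}, working on the open dense set of times where $f_t$ is known to be unique.
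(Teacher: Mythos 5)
Your proof is correct and follows essentially the same route as the paper's: squeeze $\lambda(g_t)\equiv\lambda(g_{KE})$ via Proposition 3.3 and Perelman's monotonicity, conclude from the first-variation formula that the evolved metrics are gradient shrinking solitons, and use Lemma 3.1 together with the fact that $f\equiv 0$ solves \eqref{w-equ-1} to force the soliton potential to be constant. The only (minor) deviation is that you read the soliton equation off directly at $t=0$ from the vanishing of $\tfrac{d}{dt}\lambda(g_t)$, whereas the paper first upgrades the $g_t$ for $t>0$ to K\"ahler--Ricci solitons and invokes the uniqueness theorem of [TZ2] to transport the soliton structure back to the initial metric $g$; either way the same technical caveat about differentiability of $\lambda$ at $t=0$ is deferred to the footnote and the Appendix.
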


\begin {proof} Let $g(t,\cdot)$ be a solution of \eqref{eq:krf1} with the initial metric $g$. Then by the
monotonicity of $\lambda(g)$ along the Ricci flow [Pe] ( see (3.6)
or \eqref{eq:lambda-3} below), we have
$$\lambda(g(t,\cdot))\ge \lambda(g),~\forall~ t>0.$$
Thus by Proposition 3.3, we see
$$\lambda(g(t,\cdot))=\lambda(g_{KE}),~\forall~ t>0.$$
Again by (3.6), it follows that $g(t,\cdot)$ are all gradient
shrinking Ricci solitons. Moreover,  the K\"ahler condition implies
that $g(t,\cdot)$ are in fact all K\"ahler-Ricci solitons. Therefore,
by the uniqueness of K\"ahler-Ricci solitons [TZ2], we see that $g(t,\cdot)$ are
all the same modulo automorphisms of $M$, and consequently, $g$ is a K\"ahler-Ricci soliton with
respect to some holomorphic field $X$ induced by a Hamiltonian
function $f$ which satisfies (3.5). On the other hand,
$$\lambda(g)=W(g,f)=W(g,0)=\lambda(g_{KE}).$$
This implies that $f=0$ since the
minimizer of $W(g,\cdot)$ is unique according to Lemma 3.1. Hence, $g$ is a
K\"ahler-Einstein metric with respect to the complex structure $J$.

\end{proof}

We will end this section by a well-known fact, which follows from the monotonicity of
Perelman's entropy $\lambda(\cdot)$.
For the readers' convenience, we include its proof.

\begin{lem} Let $g_t=g(t,\cdot)$ be a solution of \eqref{eq:krf1} on $M$. Suppose that there exists a sequence $g_i$ of $g_t$
converging to a limit Riemannian metric $g_\infty$ in the $C^{3}$-topology. Then $g_\infty$ is a
K\"ahler-Ricci soliton with respect to some complex structure $J_\infty$.

\end{lem}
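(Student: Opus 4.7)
The plan is to combine the monotonicity of Perelman's entropy $\lambda(\cdot)$ with the rigidity characterization of its critical points as gradient shrinking Ricci solitons, exactly the circle of ideas underlying Lemma \ref{unique-1} and Lemma 3.4. I would organize the argument into three steps: $C^3$-continuity of $\lambda(\cdot)$, saturation of the monotonicity to produce the soliton equation, and recovery of a compatible complex structure.

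First, by the definition of the $C^3$-topology on metrics, the hypothesis furnishes diffeomorphisms $\Phi_i$ with $\tilde g_i:=\Phi_i^*g_i\to g_\infty$ in $C^3$ on a fixed atlas. Since $\lambda$ is diffeomorphism invariant, it suffices to show $\lambda(\tilde g_i)\to\lambda(g_\infty)$. The minimizers $f_i$ of $W(\tilde g_i,\cdot)$ admit a uniform $C^0$-bound by the Appendix in Section~8, and Schauder estimates applied to \eqref{w-equ-1} then upgrade this to a uniform $C^{2,\alpha}$-bound. A subsequential $C^2$-limit $f_\ast$ is a competitor for $\lambda(g_\infty)$, while the minimizer $f_\infty$ of $W(g_\infty,\cdot)$ (after a small additive normalization) is a competitor for each $\lambda(\tilde g_i)$; together these pin down $\lambda(\tilde g_i)\to\lambda(g_\infty)$. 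Combined with the monotonicity $\tfrac{d}{dt}\lambda(g_t)\ge 0$ along \eqref{eq:krf1}, this shows $\lambda(g_t)$ is non-decreasing with limit $\lambda_\infty=\lambda(g_\infty)$.

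Next, I would flow $g_\infty$ forward by \eqref{eq:krf1}: short-time existence and continuous dependence on initial data give $g_\infty(s,\cdot)$ for small $s>0$, and along a subsequence $g_{t_i+s}\to g_\infty(s,\cdot)$ in $C^3$. Applying Step~1 to this subsequence, $\lambda(g_\infty(s,\cdot))=\lambda_\infty=\lambda(g_\infty)$ for every small $s\ge 0$, so $s\mapsto\lambda(g_\infty(s,\cdot))$ is constant. The monotonicity identity used in the proof of Lemma \ref{unique-1},
\[
\tfrac{d}{ds}\lambda(g_\infty(s,\cdot))\,=\,\int_M \bigl|\mathrm{Ric}(g_\infty(s,\cdot))-g_\infty(s,\cdot)+\nabla^2 f_{\infty,s}\bigr|^2\, e^{-f_{\infty,s}}\,dV_{g_\infty(s,\cdot)},
\]
then forces the integrand to vanish; evaluated at $s=0$ this gives $\mathrm{Ric}(g_\infty)+\nabla^2 f_\infty=g_\infty$, so $g_\infty$ is a gradient shrinking Ricci soliton with potential $f_\infty$.

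Finally, to produce $J_\infty$ I would set $\tilde J_i:=\Phi_i^*J$. Each $\tilde J_i$ is pointwise bounded (as $\tilde J_i^2=-\mathrm{Id}$) and $\tilde g_i$-parallel; the resulting identity $\partial\tilde J_i=\Gamma(\tilde g_i)\ast\tilde J_i$ combined with the $C^3$-bound on $\tilde g_i$ yields uniform $C^k$-bounds on $\tilde J_i$, and Arzel\`a--Ascoli extracts a subsequential limit $J_\infty$ with $\nabla^{g_\infty}J_\infty=0$, so $(M,J_\infty,g_\infty)$ is K\"ahler. Together with Step~2, $g_\infty$ is a K\"ahler-Ricci soliton with respect to $J_\infty$. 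The main technical obstacle is Step~1: one needs uniform regularity of the $W$-minimizer under $C^3$-convergence of metrics modulo diffeomorphisms, for which the Appendix and \eqref{w-equ-1} are the right input; the remainder of the argument is a fairly standard consequence of Perelman's monotonicity.
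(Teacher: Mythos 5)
Your proof is correct and rests on the same two pillars as the paper's --- Perelman's monotonicity formula \eqref{eq:lambda-3} and the uniform bound on the minimizers of $W(g_t,\cdot)$ from Proposition 8.1 combined with elliptic regularity for \eqref{w-equ-1} --- but the route to the vanishing of the soliton defect at $g_\infty$ is genuinely different. The paper never flows the limit metric: since $\lambda(g_t)$ is monotone and bounded above, its derivative is integrable in $t$, so one can pick times $\alpha_i$ with $|\alpha_i-i|\to 0$ along which $\int_M|\mathrm{Ric}(g_{\alpha_i})-g_{\alpha_i}+\nabla^2 f_{\alpha_i}|^2\,dV\to 0$ as in \eqref{eq:conv-2}; the $C^{4,\frac12}$ bounds on $\Psi_i^\star f_{\alpha_i}$ then let one pass to the limit in this quantity directly and conclude $\mathrm{Ric}(g_\infty)-g_\infty+\nabla^2 f_\infty=0$. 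You instead prove continuity of $\lambda$ under $C^3$-convergence modulo diffeomorphisms, push $g_\infty$ forward under \eqref{eq:krf1}, deduce that $\lambda$ is constant along the limit flow, and read off the vanishing of the integrand in \eqref{eq:lambda-3} at $s=0$. Your version buys a cleaner conceptual statement (continuity plus saturation of monotonicity) and the continuity of $\lambda$ is independently useful (the paper needs it implicitly in Sections 5--7 when it writes $\lambda(g_\infty)\ge\lim\lambda(g^{i_k})$); but it costs you an extra step --- the continuity lemma itself --- and it invokes the first-variation formula at the limit metric, where the paper's own footnote cautions that differentiability of $\lambda(\cdot)$ is not obvious; the paper's choice of good times $\alpha_i$ sidesteps both issues. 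Two small points to tighten: your uniform bounds on $\tilde J_i$ are limited by the $C^3$ regularity of $\tilde g_i$ (so the limit $J_\infty$ is a priori only $C^{2,\alpha}$, upgraded to analytic by integrability and elliptic regularity, as the paper notes), and you should add the one-line observation that, because $g_\infty$ is K\"ahler with respect to $J_\infty$, the gradient soliton equation forces $\nabla f_\infty$ to be holomorphic, which is what makes $g_\infty$ a \emph{K\"ahler}-Ricci soliton rather than merely a gradient shrinking Ricci soliton on a K\"ahler manifold.
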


\begin{proof} By the assumption, there exists a sequence of diffeomorphisms $\Psi_i$ of $M$
such that
$$ |\Psi_i^\star
g_i-g_\infty|_{C^{3}(M)}\to 0.$$ It follows that after taking a
subsequence if necessary, $(\Psi_i^{-1})_\star \cdot
J\cdot(\Psi_i)_{\star} $ converges to an integral complex structure
$J_\infty$ in the $C^{3}$-topology. The standard regularity theory
implies that $J_\infty$ is in fact analytic.  By (3.6), we have
\begin{align}
\label{eq:lambda-3} \frac{d}{dt}\lambda(g_t)\,=\,\int_M
|\sigma_t^\star(\text{Ric}( g_t)-g_t +\nabla^2 f_t)|_{\sigma_t^\star
g}^2 dV_{\sigma_t^\star g},
\end{align}
where $f_t$ are minimizing solutions of (3.5) associated to metrics $g_t$ and
$\sigma_t$ is the family of diffeomorphisms of $M$ generated by the time-dependent vector field
$\frac{1}{2}\nabla_{g_t} f_t$. Since
$$\lambda(g_t)\le W(g_t,0)\,=\,(2\pi)^{-m}\frac{m}{2} V$$
is bounded from above, there exists a sequence of $\alpha_i$
such that as $i$ tends to $\infty$, $|\alpha_i - i|\to 0$ and
\begin{align}\label{eq:conv-2}
\int_M |\text{Ric}( g_{\alpha_i})-g_{\alpha_i} +\nabla^2
f_{\alpha_i}|_{g_{\alpha_i}}^2 dV_{g_{\alpha_i}}\to 0.
\end{align}

Since the Ricci flow depends continuously on initial metrics, we may choose $\delta_0 > 0$
such that
\begin{align}
\label{eq:conv-1} |\Psi_i^\star g_{\alpha_i}-g_\infty
|_{C^{3}(M)}\,\le\,  \delta_0
\end{align}
as $i\to\infty$. On the other hand,  by Proposition 8.1 in our
Appendix, we know that $f_t$ are uniformly bounded. Thus by applying
the regularity theory of elliptic equations to (8.1) in Appendix and
using \eqref{eq:conv-1}, we get
$$\|\Psi_{i}^\star f_{\alpha_i}\|_{C^{4,\frac{1}{2}}(M)} \,\le\, C$$
for some uniform constant $C$. Hence, by taking a subsequence if necessary, there exists a
$C^{4,\frac{1}{2}}$-smooth function $f_\infty$ such that
$$\Psi_{\alpha_i}^\star f_{\alpha_i}\to f_\infty$$
in the $C^{4}$-topology. Then it follows from \eqref{eq:conv-2}
$$\text{Ric}( g_\infty)-g_\infty +\nabla^2 f_\infty\,=\,0.$$
By the regularity theory, $f_\infty$ is actually smooth. Furthermore, since
$g_\infty$ is K\"ahler, the gradient
vector field $\nabla f_\infty$ has to be holomorphic. This proves that
$g_\infty$ is a K\"ahler-Ricci soliton.

\end{proof}

\section{Estimates for $f$-function }

In this section, we derive some a priori estimates for the $f_t$-functions solving (3.5) associated to the KR-flow (2.1).
Since the flow preserves the K\"ahler class, we may write the
K\"ahler form of $g_t$ as
$$\omega_{\phi}\,=\,\omega_g+\sqrt{-1}\partial\overline\partial \phi$$ for some K\"aher potential $\phi=\phi_t$.
Furthermore, as usual, one can choose $\phi_t$ appropriately such that
(2.1) is reduced to a parabolic complex Monge-Amp\`ere flow
for $\phi(t,\cdot)=\phi_t$,
\begin{align}\label{eq:krf2}
\frac {\partial\phi}{\partial t}=\log \frac {\det (g_{i\overline
j}+\phi_{i\overline j})}{\det (g_{i\overline j})}+\phi-h',~~
  \phi (0,\cdot)=0,\end{align}
where $h'=-\frac{\partial\phi}{\partial t}|_{t=0}$ is a Ricci
potential of the initial metric $g$. It is easy to check that each
$\frac{\partial\phi}{\partial t}$ is a Ricci potential of $g_t$. We
set $h_t=-\frac{\partial\phi}{\partial t} + c_t$ for some constant
$c_t$ so that
\begin{align} \label{eq:ricci-1}
\int_M e^{h_t}\omega_{g_t}^n\,=\,\int_M\omega_g^n.\end{align}

The following estimates are due to G. Perelman. We refer the readers
to [ST] for their proof.

\begin{lem}\label{lem:perelman-1}  There are constants $c$ and $C$ depending only on the initial metric $g$ such that (a)
$\text{diam}(M,g_t)\le C$; (b) $\text{vol}(B_r(p),g_t )\ge c r^{2n}$; (c) $ \|h_{t}\|_{C^0(M)}\le C$;
(d) $\|\nabla h_{t}\|_{g_t} \le C$; (e) $\|\Delta h_{t} \|_{C^0(M)} \le C$.
 \end{lem}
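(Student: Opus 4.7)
The plan is to follow Perelman's original approach, as worked out in detail in the reference [ST]. The five estimates are not independent: parts (b) and (c) lie at the heart of the argument, while (d), (e), and (a) follow as consequences. First I would set up the key evolution equations. Differentiating \eqref{eq:ricci-1} in $t$ and using \eqref{eq:krf2}, one checks that the Ricci potential $h_t$ satisfies
\begin{equation*}
\frac{\partial h_t}{\partial t} \,=\, \Delta_{g_t} h_t + h_t - a_t
\end{equation*}
for some constant $a_t$, while the normalization $\omega_{g_t} \in 2\pi c_1(M)$ together with the K\"ahler condition gives the pointwise identity $\Delta_{g_t} h_t + R(g_t) = n$. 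Applying the maximum principle to the standard evolution $\frac{\partial R}{\partial t} = \Delta R + 2|\text{Ric}|^2 - R$ along the normalized flow yields a uniform lower bound $R(g_t) \ge -C$, and hence an upper bound $\Delta h_t \le C'$.

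For part (b), I would invoke the monotonicity of $\lambda(\cdot)$ along the flow recorded in \eqref{eq:lambda-3}: since $\lambda(g_t) \ge \lambda(g_0)$ and $\lambda$ controls Perelman's $\mu$-invariant at small scales, one obtains $\kappa$-non-collapsing, which is precisely $\text{vol}(B_r(p), g_t) \ge c r^{2n}$ for $r$ below a uniform constant.

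The heart of the matter is (c). Following Perelman, the approach is to study an auxiliary quantity such as $u = |\nabla h_t|^2_{g_t} + h_t - a_t$ and derive a parabolic differential inequality of the form $(\partial_t - \Delta)u \le \text{(manageable lower-order terms)}$ via the Bochner formula together with the K\"ahler identity for $\Delta h_t$. A careful maximum-principle argument, combined with the integral normalization $\int_M e^{h_t}\omega_{g_t}^n = V$ from \eqref{eq:ricci-1}, then converts this inequality into two-sided pointwise bounds on $h_t$. Once (c) is in hand, (d) follows by applying Bochner's formula to $|\nabla h_t|^2$ along the flow and absorbing the lower-order terms using the $C^0$ bound, and (e) is immediate from $\Delta h_t = n - R(g_t)$ combined with the uniform scalar curvature bound.

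Finally, the diameter bound (a) is obtained by contradiction: if $\text{diam}(M,g_t)\to \infty$ along a subsequence, then the $\kappa$-non-collapsing from (b) together with the uniform Ricci potential bounds from (c)--(e) allows one to construct a sequence of disjoint geodesic balls of definite volume, contradicting $\text{vol}(M,g_t) = V$. The main obstacle in executing this program is step (c): the maximum principle does not apply directly to $h_t$, and one must delicately play the parabolic inequality satisfied by the auxiliary quantity $u$ against the Sobolev inequality provided by the non-collapsing of (b), typically via a Moser-type iteration, before the gradient and Laplacian estimates become accessible.
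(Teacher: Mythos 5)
First, a point of comparison: the paper does not prove Lemma 4.1 at all --- it attributes the estimates to Perelman and simply refers to [ST] for the proof. So your sketch has to be measured against the argument in [ST], and there it has a genuine structural gap in the order of the steps. In [ST] the logic runs: (i) maximum-principle estimates on the quotients $|\nabla \dot\phi|^2/(B-\dot\phi)$ and $-\Delta\dot\phi/(B-\dot\phi)$, which yield only the \emph{relative} bounds $|\nabla h_t|^2\le C(B-h_t)$ and $-\Delta h_t\le C(B-h_t)$; then (ii) non-collapsing; then (iii) the diameter bound; and only then (iv) the $C^0$ bound on $h_t$, from which (d) and (e) follow. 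You propose to prove (c) before (a), by a maximum principle on $u=|\nabla h_t|^2_{g_t}+h_t-a_t$ ``combined with the integral normalization.'' This cannot work as stated: the Bochner computation along the normalized flow gives $(\partial_t-\Delta)(|\nabla h_t|^2+h_t)\le |\nabla h_t|^2+h_t+C$, whose maximum-principle conclusion grows exponentially in $t$; the sharper quotient quantities give only the relative bounds above; and the normalization $\int_M e^{h_t}\omega_{g_t}^n=V$ only forces $h_t$ to be close to $0$ \emph{somewhere}. To upgrade ``$h_t$ small at one point'' plus ``$|\nabla h_t|\le C\sqrt{B-h_t}$'' to a global $C^0$ bound one integrates along geodesics, which requires the diameter bound first. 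This is precisely why the diameter bound, not (c), is the heart of Perelman's argument.

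Second, your route to (a) is also too weak. Perelman's non-collapsing gives $\mathrm{vol}(B_r(p))\ge \kappa r^{2n}$ only on balls where $R\le r^{-2}$ (a lower bound on $\lambda$ or $\mu$ does not by itself rule out collapsing at a uniform scale without a scalar curvature upper bound); since the upper bound on $R$ is equivalent to (e) and hence, in your scheme, downstream of (c), you cannot pack disjoint balls of a fixed radius and definite volume to bound the diameter. In [ST] the diameter bound is obtained by a separate contradiction argument: test functions supported on dyadic annuli around a base point are inserted into the $W$-functional, and $\int_M R\,dV_{g_t}=nV$ together with $R\ge -C$ and non-collapsing contradicts the monotone lower bound on $\lambda(g_t)$. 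Your preliminary observations --- $R\ge -C$ by the maximum principle, the evolution equation for $h_t$, the identity relating $\Delta h_t$ and $n-R$, and the deduction of (d), (e) once (c) and the relative estimates are in hand --- are all correct ingredients of [ST]; the proof breaks at the claim that (c) precedes (a) and at the packing argument for (a).
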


Under the assumption that the K-energy is bounded from below, we can improve the
above estimates as follows.

\begin{prop} If in addition the K-energy is bounded from
below on the space of K\"ahler potentials on $(M,J)$ in Lemma 4.1. Then we have:

\noindent
(a) $\lim_{t\to \infty}\| h_{t}\|_{C^0(M)}=0$;

\noindent
(b) $\lim_{t\to \infty}\|\nabla h_{t}\|_{g_t}=0$;

\noindent
(c)  $\lim_{t\to \infty}\|\Delta h_{t} \|_{C^0(M)}=0$.
\end{prop}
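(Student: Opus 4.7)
The plan is to combine the decay of the Mabuchi K-energy along the flow with the uniform bounds of Lemma \ref{lem:perelman-1}, and then bootstrap via elliptic regularity and interpolation. First, along the normalized K\"ahler-Ricci flow the K-energy $\nu(\omega_t)$ satisfies
$$\frac{d}{dt}\nu(\omega_t) \,=\, -\,\frac{1}{V}\int_M |\nabla h_t|^2\,\omega_t^n,$$
so the hypothesis that $\nu$ is bounded from below on the K\"ahler class $2\pi c_1(M)$ immediately gives
$$\int_0^\infty \left(\int_M |\nabla h_t|^2\,\omega_t^n\right) dt \,<\, \infty.$$

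To upgrade this to the full limit $F(t):=\int_M |\nabla h_t|^2\,\omega_t^n \to 0$, I would compute $F'(t)$ using the parabolic equation $\dot h_t = \Delta_t h_t + h_t - a_t$ together with the evolution of $\omega_t^n$, and invoke the pointwise bounds of Lemma \ref{lem:perelman-1} (on $h_t$, $\nabla h_t$, and $\Delta h_t$) to conclude that $|F'(t)|\le C$ uniformly. Since a non-negative integrable function on $[0,\infty)$ with bounded derivative must tend to $0$, we obtain $F(t)\to 0$. Perelman's diameter bound and non-collapsing yield a uniform Poincar\'e inequality on $(M, g_t)$, so $\|h_t - \bar h_t\|_{L^2(\omega_t^n)} \to 0$, where $\bar h_t$ denotes the $\omega_t^n$-mean of $h_t$. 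Combining the normalization $\int_M e^{h_t}\omega_t^n = V$ with a Taylor expansion that uses the uniform $C^0$-bound on $h_t$ gives $|\bar h_t|\le C\int_M h_t^2\,\omega_t^n$; plugging into $\int h_t^2 = \|h_t-\bar h_t\|_{L^2}^2 + V\bar h_t^2$ and solving for $\bar h_t$ forces $\bar h_t \to 0$, hence $\|h_t\|_{L^2(\omega_t^n)}\to 0$.

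Finally, to deduce the pointwise statements (a)--(c), I would combine this $L^2$-decay of $h_t$ with uniform higher-order bounds on $h_t$ and the classical interpolation inequality
$$\|u\|_{C^k(M)} \,\le\, C\,\|u\|_{L^2(M)}^{\theta}\,\|u\|_{C^{k+2}(M)}^{1-\theta}.$$
The main obstacle is securing such uniform $C^{k+2}$-bounds along the whole flow: Lemma \ref{lem:perelman-1} only furnishes $C^0$-bounds on $h_t$, $\nabla h_t$ and $\Delta h_t$, and bootstrapping the elliptic equation $\Delta_t h_t = R(g_t) - n$ to higher regularity requires uniform control of the local geometry of $(M, g_t)$ (uniform Sobolev constant, non-collapsing, and a scalar curvature bound), all of which are supplied by Perelman's work on the K\"ahler-Ricci flow. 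Once these uniform higher-order estimates are in place, the interpolation argument yields (a), (b) and (c) simultaneously.
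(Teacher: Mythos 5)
Your outline for part (a) tracks the paper's proof fairly closely: K-energy decay gives $\int_0^\infty\int_M|\nabla h_t|^2\omega_{g_t}^n\,dt<\infty$, a differential inequality upgrades this to $\int_M|\nabla h_t|^2\omega_{g_t}^n\to 0$ (the paper uses $H'\le CH$ from [CT2], [PSSW] rather than $|H'|\le C$, but either closes the argument), and a Poincar\'e inequality plus the normalization $\int_Me^{h_t}\omega_{g_t}^n=V$ then yields $L^2$ and $C^0$ decay of $h_t$. Two remarks here. First, your step ``solving for $\bar h_t$'' does not actually close: from $|\bar h_t|\le a_t+C\bar h_t^2$ with $a_t\to 0$ you can only conclude $\bar h_t\to 0$ if you already know $|\bar h_t|<1/(2C)$, and the a priori bound on $\bar h_t$ from Lemma 4.1 need not be that small. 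The paper sidesteps this by working with the $e^{h_t}$-weighted mean and the weighted Poincar\'e inequality of [TZ3], and only extracts the constant at the very end, after $\|\tilde h_t\|_{C^0}\to 0$ is known, where the normalization forces it to vanish. Second, your passage from $L^2$ to $C^0$ is fine for $k=0$ because the needed interpolation $\|\tilde h_t\|_{C^0}^{n+1}\le C\|\nabla h_t\|_{C^0}^n\bigl(\int_M\tilde h_t^2\omega_{g_t}^n\bigr)^{1/2}$ only uses the gradient bound and non-collapsing, both supplied by Lemma 4.1.

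The genuine gap is in your route to (b) and (c). You propose to interpolate between the $L^2$ decay and uniform $C^{k+2}$ bounds on $h_t$, obtained by elliptic bootstrapping of $\Delta_t h_t=R(g_t)-n$. Such bootstrapping requires H\"older (or better) control of $R(g_t)$ in a uniform gauge, hence uniform bounds on the full curvature tensor and on the local harmonic/holomorphic coordinates of $(M,g_t)$; none of this is provided by Perelman's estimates, which give only the $C^0$ bound on the scalar curvature together with diameter, non-collapsing, and the bounds on $h_t$, $\nabla h_t$, $\Delta h_t$. Indeed, obtaining uniform curvature bounds along the K\"ahler--Ricci flow is essentially the whole difficulty of the convergence problem, so it cannot be an input at this stage. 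The paper instead derives (b) and (c) from a parabolic smoothing estimate (Lemma 4.3, from [PSSW]): if $\|\partial\phi/\partial t\|_{C^0}\le\epsilon$ at time $t$, then $\|\nabla h_{t+2}\|_{g_{t+2}}+\|\Delta h_{t+2}\|_{C^0}\le K\epsilon$. This is proved by the maximum principle applied to the evolution equations of $|\nabla\dot\phi|^2$ and $\Delta\dot\phi$ and requires no curvature bounds. Note also that to apply it one needs $\|\partial\phi/\partial t\|_{C^0}\to 0$, which is strictly stronger than $\|h_t\|_{C^0}\to 0$ by the time-dependent constant $a_t=-\int_M\dot\phi\,\omega_{g_t}^n$; the paper devotes a separate argument, using the normalization of $h'$ afforded by the lower bound of the K-energy, to show $a_t\to 0$. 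Your proposal omits this step as well.
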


\begin{proof} Recall that the K-energy $\mu(\cdot)$ is defined for a K\"ahler potential $\phi$ on $(M,J)$ by
$$\mu(\phi)=-\int_0^1\int_M \dot\psi (R(\psi_s)-n)\omega_{\psi_s}^n,$$
where $\psi_s$ is a path of K\"ahler potentials on $(M,J)$  and
$R(\psi_s)$ denote scalar curvatures of K\"ahler forms
$\omega_{\psi_s}$. Then for a family of K\"ahler potentials
$\phi=\phi_t$ in (4.1), we have
$$\frac{d\mu(\phi)}{dt}=-\int_M |\nabla h_{t}|_{g_t}^2\omega_{g_t}^n.$$
Thus by the lower bound of $\mu(\cdot)$, one sees that there exists a
sequence of $t_i\in [i,i+1]$ such that
\begin{align}\lim_{t_i\to \infty} H(t_i)= \lim_{t_i\to \infty}\int_M |\nabla h_{t_i}|_{g_{t_i}}^2\omega_{g_{t_i}}^n=0.\end{align}
Since $H(t)$ satisfies a differential equation,
$$\frac{d H(t)}{dt}\le CH(t),$$
for some uniform constant (cf. [CT2], [PSSW]),
it is easy to derive
\begin{align} \lim_{t\to \infty}\int_M |\nabla h_{t}|_{g_{t}}^2\omega_{g_{t}}^n=0.\end{align}

Let
$$\tilde h_t\,=\,h_t-\frac{1}{V}\int_M h_te^{h_t}\omega_{g_t}^n.$$
By using the weighted Poincare inequality in [TZ3], we have
$$\int_M \tilde h_t^2  e^{h_t}\omega_{g_{t}}^n\le \int_M |\nabla h_{t}
|_{g_t}^2 e^{h_t}\omega_{g_{t}}^n.$$
It follows from (c) in Lemma 4.1,
\begin{align} \lim_{t\to\infty} \int_M \tilde h_t^2 \omega_{g_{t}}^n=0.\end{align}
We claim
\begin{align} \lim_{t\to\infty} \|\tilde h_t\|_{C^0(M)} =0.\end{align}
So we get (a) in our proposition by the normalization condition (4.2).
In fact the claim follows  from an inequality
\begin{align}\|\tilde h_t\|_{C^0(M)}^{n+1}\le C \|\nabla h_t\|_{g_t}^{n}[\int_M \tilde h_t^2 \omega_{g_{t}}^n]^{\frac{1}{2}}.\end{align}
(4.7) can be proved by using the non-collapsing estimate (b) in Lemma 4.1 (cf. [PSSW], [Zh]). By (d) in Lemma 4.1, the claim follows.

Next, set
$$a_t=-\int_M
\frac{\partial\phi}{\partial t}\omega_{g_t}^n,$$
a direct computation shows
$$\frac {d a_t}{dt}=a_t-\int_M |\nabla h_t|_{g_t}^2 \omega_{g_t}^n.$$
It follows that
$$e^{-t} a_t - a_0 \,=\,\int_0^t e^{-s} |\nabla h_s|_{g_s}^2\omega_{g_s}^n\wedge ds.$$
On the other hand, since the K-energy is bounded from below, as in [CT1] or [TZ3], we can normalize $h'$  in (4.1)
by adding a suitable constant so that
$$\int_M h'\omega_g^n
=-\int^{\infty}_0\int_M |\nabla h_t|_{g_t}^2
e^{-t}\omega_{g_t}^n\wedge dt.$$
Thus we get
\begin{align} a_t \,=\,\int_t^{\infty} e^{t-s} |\nabla h_s|_{g_s}^2\omega_{g_s}^n\wedge ds\,\to\, 0,~\text{as}~t\to\infty.\end{align}
Hence, by (a) in our proposition,  we also get
\begin{align} \lim_{t\to\infty}\|\frac{\partial\phi}{\partial t}\|_{C^0(M)}=0.\end{align}
Now (b) and (c) follow from the standard gradient and Laplacian
estimates for the K\"ahler-Ricci flow which can be derived by using
the Maximum principle as in [Ba], [T1]. For instance, the following
lemma was proved in [PSSW] and can be applied to complete the proof
of (b) and (c).

\end{proof}

\begin{lem}([PSSW]) There exist $\delta, K>0$ depending only on the dimension $n$ with the following property: For any
  $\epsilon>0$ with $\epsilon<\delta$ and any $t>0$, if
  $$\|\frac{\partial\phi}{\partial t}\|_{C^0(M)}\le \epsilon,$$
then
$$\|\nabla h_{t+2}\|_{g_{t+2}}+\|\Delta h_{t+2}\|_{C^{0}(M)}\le K\epsilon.$$
\end{lem}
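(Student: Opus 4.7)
The plan is to apply a parabolic Bernstein-type argument to the function $u_s := \partial\phi/\partial s$ on the slab $M\times[t,t+2]$, reducing the desired bounds to a pointwise bound on $u_s$ via the parabolic maximum principle. Differentiating (4.1) in $s$ yields the clean parabolic equation
$$
\Bigl(\frac{\partial}{\partial s}-\Delta_{g_s}\Bigr)u_s\,=\,u_s.
$$
Since $h_s=-u_s+c_s$ with $c_s$ fixed by $\int_M e^{h_s}\omega_{g_s}^n=V$, a Jensen-type comparison gives $|c_s|\le\|u_s\|_{C^0}$. The substitution $\tilde u_s:=e^{-(s-t)}u_s$ converts the equation above into a homogeneous heat equation, so the parabolic maximum principle propagates the hypothesis to $\|u_s\|_{C^0}\le e^{s-t}\|u_t\|_{C^0}\le e^2\epsilon$ throughout $[t,t+2]$, and hence $\|h_s\|_{C^0}\le C\epsilon$ there. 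Because $\nabla h_s=-\nabla u_s$ and $\Delta h_s=-\Delta u_s$, it suffices to estimate $|\nabla u_{t+2}|$ and $|\Delta u_{t+2}|$ in terms of $\epsilon$.

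For the gradient bound I would compute $(\partial_s-\Delta)|\nabla u_s|^2$ using the identity $\partial_s g^{i\bar j}=R^{i\bar j}-g^{i\bar j}$ together with the K\"ahler Bochner formula. The Ricci contributions from the two terms cancel exactly once the equation $(\partial_s-\Delta)u=u$ is substituted, and one arrives at
$$
\Bigl(\frac{\partial}{\partial s}-\Delta_{g_s}\Bigr)|\nabla u_s|^2\,=\,|\nabla u_s|^2\,-\,|\nabla\nabla u_s|^2\,-\,|\nabla\bar\nabla u_s|^2.
$$
Since $|\nabla u|^2$ is not controlled at the initial slice $s=t$, I introduce the time-weighted test function $G(x,s):=(s-t)|\nabla u_s|^2+Bu_s^2$ for a constant $B\ge 2$. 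Combining the display above with $(\partial_s-\Delta)u^2=2u^2-2|\nabla u|^2$, the $|\nabla u|^2$ contributions are killed on $[t,t+2]$, leaving $(\partial_s-\Delta)G\le 2Bu_s^2\le C\epsilon^2$. Applying the parabolic maximum principle to $G-C\epsilon^2(s-t)$ and using $G(\cdot,t)\le B\epsilon^2$ yields $G\le C\epsilon^2(1+(s-t))$; evaluating at $s=t+2$ gives $|\nabla u_{t+2}|^2\le C_n\epsilon^2$, and hence $\|\nabla h_{t+2}\|_{g_{t+2}}\le K_1\epsilon$.

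For the Laplacian estimate I would iterate the same scheme with the higher-order quantity
$$
H(x,s)\,:=\,(s-t)^2\,|\partial\bar\partial u_s|^2\,+\,A(s-t)\,|\nabla u_s|^2\,+\,B\,u_s^2,
$$
for appropriate constants $A,B$. The main obstacle is that the evolution of $|\partial\bar\partial u|^2$ inevitably brings in full Riemann curvature contractions of the form $R_{i\bar j k\bar l}u^{k\bar l}u^{i\bar j}$, which are not a priori controlled by Perelman's estimates since Lemma 4.1 only supplies uniform bounds on the scalar curvature and on $\|\Delta h_t\|_{C^0}$. The resolution is to exploit the good negative third-derivative terms $-|\nabla\partial\bar\partial u|^2-|\bar\nabla\partial\bar\partial u|^2$ supplied by the Bochner identity and to absorb the bad curvature terms by a Young-type inequality, choosing $A$ large enough so that the $|\nabla u|^2$-terms generated in the evolution of $(s-t)^2|\partial\bar\partial u|^2$ are dominated, and then $B$ large enough so that the remaining $u^2$-terms are dominated. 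This produces $(\partial_s-\Delta)H\le C\epsilon^2$ on $M\times[t,t+2]$, and the parabolic maximum principle applied once more yields $|\Delta u_{t+2}|\le K_2\epsilon$. Since $\Delta h_s=-\Delta u_s$, the lemma follows with $K=K_1+K_2$ and $\delta$ any fixed small number that keeps the above absorption constants valid.
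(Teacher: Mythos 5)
The paper does not actually prove this lemma; it is quoted verbatim from [PSSW], so the only question is whether your argument would stand on its own. The first half does: the identity $(\partial_s-\Delta_{g_s})u_s=u_s$, the propagation $\|u_s\|_{C^0}\le e^{s-t}\epsilon$, the cancellation of the Ricci terms in the evolution of $|\nabla u_s|^2$ (the $(R^{i\bar j}-g^{i\bar j})u_iu_{\bar j}$ coming from $\partial_s g^{i\bar j}$ against the $+R^{i\bar j}u_iu_{\bar j}$ from the K\"ahler Bochner formula), and the maximum principle applied to $(s-t)|\nabla u_s|^2+Bu_s^2$ are all correct and are essentially the [PSSW] gradient argument.

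The Laplacian half has a genuine gap. You correctly identify that evolving the tensor norm $|\partial\bar\partial u_s|^2$ produces the term $R_{i\bar jk\bar l}u^{i\bar j}u^{k\bar l}$, but your proposed fix --- absorbing it into the negative third--derivative terms by Young's inequality --- cannot work: that term is \emph{quadratic in the Hessian} with coefficient the full Riemann tensor, for which no a priori bound is available along the flow (Perelman's estimates control only $R$, $h_t$, $\nabla h_t$, $\Delta h_t$), and a term of the form $|{\rm Rm}|\,|\partial\bar\partial u|^2$ is of strictly lower differential order than $|\nabla\partial\bar\partial u|^2$, so no pointwise Young-type inequality trades one for the other; absorbing it into the $-A(s-t)|\partial\bar\partial u|^2$ term generated by the $|\nabla u|^2$ piece would again require a uniform bound on $|{\rm Rm}|$. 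The way [PSSW] avoid this is structural, not an absorption: one evolves the \emph{scalar} $\Delta u_s$ rather than $|\partial\bar\partial u_s|^2$. Since $h_s=-u_s+c_s$ is the Ricci potential, $u_{i\bar j}=-(R_{i\bar j}-g_{i\bar j})$, and hence the metric-variation term in $\partial_s(g^{i\bar j}u_{i\bar j})$ is $(R^{i\bar j}-g^{i\bar j})u_{i\bar j}=-|\nabla\bar\nabla u_s|^2$, giving the clean identity
\begin{equation*}
\Bigl(\frac{\partial}{\partial s}-\Delta_{g_s}\Bigr)(\Delta u_s)\,=\,\Delta u_s\,-\,|\nabla\bar\nabla u_s|^2,
\end{equation*}
in which no full curvature appears and the quadratic Hessian term comes with a \emph{good} sign. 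The Laplacian bound then follows by combining this with the gradient identity (e.g.\ via the quantity $-\Delta u_s+|\nabla u_s|^2$, which is a subsolution of $(\partial_s-\Delta)Q=Q$, together with the time-weighted barrier built from $|\nabla u_s|^2$ and $u_s^2$). You should replace your function $H$ by one built on $\Delta u_s$ itself and redo the bookkeeping; as written, the second half of your proof does not close.
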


From Proposition 4.2, we can deduce

\begin{prop} Suppose that the K-energy is bounded from
below. Then there exists a sequence of $t_i\in [i,i+1]$ such that

\noindent
(a) $\lim_{t_i\to \infty}\|\Delta f_{t_i} \|_{L^2(M,\omega_{g_{t_i}})}\,=\,0$;

\noindent
(b) $\lim_{t_i\to \infty}\|\nabla f_{t_i}\|_{L^2(M,\omega_{g_{t_i})}}\,=\,0$;

\noindent
(c) $\lim_{t_i\to \infty} \int_M f_{t_i} e^{-f_{t_i}} \omega_{g_{t_i}}^n \,=\, 0$.

\end{prop}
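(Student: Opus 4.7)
The plan is to exploit Perelman's monotonicity formula for the entropy $\lambda(g_t)$ together with the improved Ricci-potential estimates of Proposition 4.2. Recall from \eqref{eq:lambda-3} that
$$\frac{d\lambda(g_t)}{dt} \,=\, \int_M \bigl|\mathrm{Ric}(g_t)-g_t+\nabla^2 f_t\bigr|^2 e^{-f_t}\,dV_{g_t}$$
(after an invariant pullback by $\sigma_t$). Since the K\"ahler class $2\pi c_1(M)$ is preserved, $W(g_t,0) = n V (2\pi)^{-m}$ is independent of $t$, and $\lambda(g_t)\le W(g_t,0)$ is nondecreasing and bounded. Hence the time integral of the right-hand side is finite, so one can choose $t_i\in[i,i+1]$ with
$$\int_M \bigl|\mathrm{Ric}(g_{t_i})-g_{t_i}+\nabla^2 f_{t_i}\bigr|^2 e^{-f_{t_i}}\,dV_{g_{t_i}} \longrightarrow 0.$$
By the $C^0$-bound $\|f_t\|_{C^0(M)}\le C$ from Proposition 8.1 in the Appendix, the weight $e^{-f_{t_i}}$ is uniformly bounded above and below, and so the same $L^2$-convergence holds without the weight.

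For (a), take the real trace of $\mathrm{Ric}-g+\nabla^2 f$, which equals $R-2n+\Delta f$. Combining the previous step with Proposition 4.2(c) (which, via the identity $R=2n+\Delta h$, yields $\|R_{t_i}-2n\|_{C^0(M)}\to 0$), one concludes $\|\Delta f_{t_i}\|_{L^2}\to 0$. Statement (b) follows by integration by parts on the closed manifold, Cauchy--Schwarz, and the $C^0$-bound on $f_{t_i}$:
$$\|\nabla f_{t_i}\|_{L^2}^2 \,=\, -\int_M f_{t_i}\,\Delta f_{t_i}\,dV_{g_{t_i}} \,\le\, \|f_{t_i}\|_{L^2}\,\|\Delta f_{t_i}\|_{L^2} \,\longrightarrow\, 0.$$

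For (c), multiply the Euler--Lagrange equation \eqref{w-equ-1} by $e^{-f_{t_i}}$ and integrate; using $\int_M\Delta f\cdot e^{-f}\,dV=\int_M|\nabla f|^2 e^{-f}\,dV$ and the normalization $\int_M e^{-f_t}\,dV=V$ one obtains
$$\int_M f_{t_i}e^{-f_{t_i}}\,dV_{g_{t_i}} \,=\, \lambda(g_{t_i})\,V \,-\, \tfrac12\int_M R_{t_i}e^{-f_{t_i}}\,dV_{g_{t_i}} \,-\, \tfrac12\int_M |\nabla f_{t_i}|^2 e^{-f_{t_i}}\,dV_{g_{t_i}}.$$
By the uniform convergence $R_{t_i}\to 2n$ and by (b), the last two terms tend to $nV$ and $0$ respectively, so (c) reduces to proving $\lambda(g_{t_i})\to n$ (in the normalization of \eqref{w-equ-1}), equivalently $\lambda(g_{t_i})\to W(g_{t_i},0)$.

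This identification of the limit is the main obstacle. I intend to address it by applying Lemma 3.5 to the sequence $\{g_{t_i}\}$: modulo diffeomorphisms, some subsequence converges in $C^3$ to a K\"ahler-Ricci soliton $(g_\infty,J_\infty)$. The corresponding Ricci potentials $h_{t_i}$ then converge to a Ricci potential $h_\infty$ of $g_\infty$, but by Proposition 4.2(b) one must have $\nabla h_\infty\equiv 0$; combined with the normalization $\int e^{h_\infty}\omega_{g_\infty}^n = V$, this forces $h_\infty\equiv 0$, so $g_\infty$ is K\"ahler-Einstein with respect to $J_\infty$. Proposition 3.3 then gives $\lambda(g_\infty)=W(g_\infty,0)=n$, so $\lambda(g_{t_i})\to n$ and (c) follows. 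The technical point to verify is the $C^3$-subsequential compactness, which rests on Perelman's diameter and non-collapsing estimates from Lemma 4.1 together with the uniform $f$-estimates of the Appendix; this is precisely the kind of compactness argument already embedded in the proof of Lemma 3.5.
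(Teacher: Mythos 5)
Your treatment of (a) and (b) is essentially the paper's: you select $t_i$ from the boundedness of $\lambda(g_t)$ and the monotonicity formula, pass to the trace of $\mathrm{Ric}-g+\nabla^2f$ (the paper phrases this as $\frac{d}{dt}\lambda(g_t)\ge\frac{1}{2n}\int_M|\Delta(f_t-h_t)|^2e^{-f_t}\omega_{g_t}^n$ and then invokes Proposition 4.2 for $\Delta h_t$), remove the weight $e^{-f_t}$ using the uniform bound of Proposition 8.1, and integrate by parts for (b). These steps are correct and match the paper.

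Part (c) has a genuine gap. Your identity $\int_M fe^{-f}dV=\lambda V-\frac12\int_M Re^{-f}dV-\frac12\int_M|\nabla f|^2e^{-f}dV$ is just the Euler--Lagrange equation \eqref{w-equ-1} paired with the definition $\lambda=W(g,f)$; given (a), (b) and Proposition 4.2, the statement $\lambda(g_{t_i})\to n$ is therefore \emph{equivalent} to (c), so you have reformulated the problem rather than reduced it. The independent input you propose --- subsequential $C^3$-compactness of $(M,g_{t_i})$ modulo diffeomorphisms --- is not available under the hypotheses of the proposition: Lemma 3.5 takes such convergence as a \emph{hypothesis}, and Perelman's estimates in Lemma 4.1 (diameter, non-collapsing, bounds on $h_t$, $\nabla h_t$, $\Delta h_t$) contain no curvature bound, so Cheeger--Gromov compactness cannot be invoked. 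In the paper this compactness is obtained only in Sections 5--7 under an additional a priori $C^3$-closeness to $g_{KE}$ established by contradiction arguments, and those arguments \emph{use} Corollary 4.6, which is itself deduced from Proposition 4.4(c); your route would thus also be circular within the paper's logical order. The paper proves (c) with no compactness at all: it sets $\tilde f_t=f_t-\frac1V\int_Mf_te^{h_t}\omega_{g_t}^n$, derives $\int_M\tilde f_{t_i}^2\omega_{g_{t_i}}^n\to0$ from the weighted Poincar\'e inequality together with (a), upgrades $\tilde f_{t_i}^+$ to $C^0$-smallness by Moser iteration (using $-\Delta\tilde f_t\le C'$ from \eqref{w-equ-1}), controls $\int_M(-\tilde f_{t_i}^-)\omega_{g_{t_i}}^n$ via the constraint $\int_M\tilde f_te^{h_t}\omega_{g_t}^n=0$ and $\|h_t\|_{C^0}\to0$, and finally pins down the additive constant $f_{t_i}-\tilde f_{t_i}$ using the normalization $\int_Me^{-f_t}dV_{g_t}=V$. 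You would need to replace your compactness step by an argument of this kind.
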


\begin{proof}
Let $\sigma_t$ be the family of diffeomorphisms generated by the time-dependent gradient vector field
$\frac{1}{2}\nabla_{g_t} f_t$. Then by (3.6), we have
\begin{align} \frac{d}{dt}\lambda(g_t)&=\int_M \sigma_t^\star(|\text{Ric}( g_t)-g_t +\nabla^2 f_t|^2e^{-f_t}) [\sigma_t^\star(\omega_{g_t})]^n,\notag\\
&=\int_M |\text{Ric}( g_t)-g_t +\nabla^2 f_t|_{g_t}^2
e^{-f_t}\omega_{g_t}^n.\notag
\end{align}
It follows from (2.1) that
\begin{align} \frac{d}{dt}\lambda(g_t)
&=\int_M |\frac{1}{2}\partial\overline\partial h_t -D^2
f_t|_{g_t}^2 e^{-f_t}\omega_{g_t}^n\notag\\
&\ge \frac{1}{2n}\int_M |\Delta(f_t-h_t)|^2 e^{-f_t}\omega_{g_t}^n.\notag
\end{align}
Since $\lambda(g_t)\le (2\pi)^{-n}nV$ are uniformly bounded, we see that there exists a sequence of $t_i\in [i,i+1]$ such that
$$\lim_i\int_M |\triangle(f_{t_i}-h_{t_i})|^2 e^{-f_{t_i}}\omega_{g_{t_i}}^n=0.$$
Thus by Proposition 4.2,  we will get (a).  Here we used the fact that $f_t$ are uniformly bounded according to Proposition 8.1 in Appendix.
Moreover, by
\begin{align} \int_M |\nabla f_{t}|^2 \omega_{g_t}^n\,= \,-\int_M f_{t} \triangle f_{t}  \omega_{g_t}^n\,\le\, C \left ( \int_M
|\Delta f_{t}|^2  \omega_{g_t}^n\right )^{\frac{1}{2}}\notag, \end{align}
Thus we get (b) from (a).

Define
$$\tilde f_t=f_t-\frac{1}{V}\int_Mf_t e^{h_t}\omega_{t}^n.$$
By using the weighted Poincare inequality in [TZ3], we have
$$\int_M \tilde f_t^2 e^{h_t} \omega_{g_t}^n\le 2\int_M |\nabla f_{t}|^2 e^{h_t}\omega_{g_t}^n\le  C [\int_M|\Delta f_{t}
|^2 \omega_{g_t}^n]^{\frac{1}{2}}.$$
Hence, by using (a) and the boundedness of $h_t$, we can deduce
\begin{align} \label{eq:f-t1}
\int_M \tilde f_{t_i}^2 \omega_{g_{t_i}}^n\to 0,~\text{as}~t_i\to\infty.
\end{align}
Since
$$-\triangle \tilde f_t=-\Delta f_t= f +\frac{1}{2} (R - |\nabla
f|^2)-\lambda(g)\le C',$$
we have
$$-\Delta \tilde f_t^+\le C',$$
where $\tilde f_t^+=\max\{\tilde f_t, 0\}$.
By using the standard Moser's iteration, we can derive
$$\tilde f_t^+\le C'' \|\tilde
f_t^+\|_{L^2(M,g_t)}\le C''\|\tilde f_t\|_{L^2(M,g_t)}.$$
Thus by \eqref{eq:f-t1}, we get
\begin{align}\lim_{t_i\to \infty} \|\tilde f_{t_i}^{+}\|_{C^0(M)}=0.\end{align}

Let $\tilde f_t^-=\min\{\tilde f_t, 0\}$. Since $f_t$ is uniformly bounded and $h_t$ converges to $0$, it follows from  $\int_M \tilde f_t
e^{h_t}\omega_{g_t}^n=0$
\begin{align} \lim_{t_i\to \infty} \int_M \tilde f_{t_i}^{-} \omega_{g_{t_i}}^n = 0.\notag
\end{align}
Thus
\begin{align}|\int_M \tilde f_{t_i} e^{-\tilde f_{t_i}}\omega_{g_{t_i}}^n|&\le \int_{M^+} \tilde f_{t_i}^+ e^{-\tilde f_{t_i}}\omega_{g_{t_i}}^n
+\int_{M^{-}}(-\tilde f_{t_i}^- ) e^{-\tilde f_{t_i}} \omega_{g_{t_i}}^n\notag\\
&\le\|\tilde f_{t_i}^{+}\|_{C^0(M)} \int_M e^{-\tilde f_{t_i}} \omega_{g_{t_i}}^n
+  \sup_M e^{-\tilde f_{t_i}}\int_{M}(-\tilde f_{t_i}^-) \omega_{g_{t_i}}^n\notag\\
&\to 0,~\text{as}~t_i\to\infty.
\end{align}
Here we denote $M^{+}=\{\tilde f_{t_i}\ge 0\}$ and  $M^{-}=\{\tilde f_{t_i}\le 0\}$ and we used the fact  that $\tilde f$ are uniformly bounded.
By (4.12) and the normalization condition (3.2), we will get the property (c) since $f_t$ differs from  $\tilde f_t$ by a function $c(t)$ with
$\lim_{i\to \infty} c(t_i)= 0$.

\end{proof}

\begin{defi} The energy level $L(g)$ of entropy $\lambda(\cdot)$ along the KR-flow $(g_t; g)$
with an initial K\"ahler metric $g$ is given by
$$L(g)\,=\,\lim_{t\to\infty}\lambda(g_t).$$

\end{defi}

By the monotonicity of $\lambda(g_t)$, we see that $L(g)$ exists and it is finite. In case that the K-energy is bounded from
below, from Proposition 4.4 together with Proposition 4.2, we get

\begin{cor}Suppose that the Mabuchi's K-energy is bounded from
below. Then
 $$L(g)\,=\,(2\pi)^{-n}nV\,=\,\sup\{\lambda(g')|~\omega_{g'}\in 2\pi c_1(M)\}.$$
\end{cor}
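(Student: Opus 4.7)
The plan is to compute $L(g) = \lim_{t\to\infty}\lambda(g_t)$ explicitly by evaluating $\lambda(g_{t_i}) = W(g_{t_i}, f_{t_i})$ along the sequence $\{t_i\}$ produced by Proposition 4.4, and then upgrading the subsequential limit to the full limit via the monotonicity of $\lambda$ along the KR-flow, cf.\ \eqref{eq:lambda-3}.

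Using the representation \eqref{w-fun-1},
$$\lambda(g_t) = (2\pi)^{-2n}\int_M\!\left[\tfrac{1}{2}\bigl(R(g_t) + |\nabla f_t|^2\bigr) + f_t\right]\!e^{-f_t}\omega_{g_t}^n,$$
I would estimate each of the three integrands along $t = t_i$ separately. For the gradient term, integration by parts (using $\nabla(e^{-f}) = -e^{-f}\nabla f$) yields
$$\int_M |\nabla f_{t_i}|^2 e^{-f_{t_i}}\omega_{g_{t_i}}^n = \int_M \Delta f_{t_i}\cdot e^{-f_{t_i}}\omega_{g_{t_i}}^n,$$
which vanishes as $i\to\infty$ by Proposition 4.4(a) together with the uniform $C^0$-bound on $f_{t_i}$ from Proposition 8.1 of the Appendix (which makes $e^{-f_{t_i}}$ uniformly bounded). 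The potential term $\int_M f_{t_i}e^{-f_{t_i}}\omega_{g_{t_i}}^n$ vanishes directly by Proposition 4.4(c). For the scalar curvature term, I use that along the KR-flow the relation $\text{Ric}(g_t) - g_t = \sqrt{-1}\partial\bar\partial h_t$ gives $R(g_t) - 2n = \Delta h_t$; combining this with Proposition 4.2(c), which says $\|\Delta h_t\|_{C^0}\to 0$, and with the normalization $\int_M e^{-f_t}\omega_{g_t}^n = V$ from \eqref{norm-1}, one concludes
$$\int_M R(g_{t_i})\,e^{-f_{t_i}}\omega_{g_{t_i}}^n = 2n\!\int_M e^{-f_{t_i}}\omega_{g_{t_i}}^n + \int_M \Delta h_{t_i}\cdot e^{-f_{t_i}}\omega_{g_{t_i}}^n \longrightarrow 2nV.$$

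Combining these three limits gives $\lim_{i\to\infty}\lambda(g_{t_i}) = n(2\pi)^{-2n}V$. By the computation carried out in the proof of Proposition 3.3, this value coincides with $W(g_{KE},0) = \lambda(g_{KE})$, and by Proposition 3.3 itself it equals $\sup\{\lambda(g')\mid\omega_{g'}\in 2\pi c_1(M)\}$. Finally, since \eqref{eq:lambda-3} shows that $\lambda(g_t)$ is monotone nondecreasing along the KR-flow, the full limit $L(g) = \lim_{t\to\infty}\lambda(g_t)$ exists and must coincide with any subsequential limit, giving $L(g) = \sup\lambda(g')$ as asserted. The only real work is bookkeeping the three integrals using Propositions 4.2 and 4.4 together with the $C^0$-bound from the Appendix; no deeper obstacle arises because Proposition 4.4 has already isolated the required convergences.
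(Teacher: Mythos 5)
Your proposal is correct and fills in Corollary 4.6 exactly the way the paper intends: the paper offers no written proof beyond the citation of Propositions 4.2 and 4.4, and your three-term evaluation of $W(g_{t_i},f_{t_i})$ (gradient term via integration by parts and Proposition 4.4(a) plus the $C^0$-bound of Proposition 8.1, potential term via 4.4(c), scalar-curvature term via $R-2n=\Delta h_t$ and 4.2(c)), followed by the upgrade to the full limit by monotonicity, is precisely that argument. The only cosmetic remarks are that your value $n(2\pi)^{-2n}V$ agrees with the paper's own computation of $W(g,0)$ in Proposition 3.3 (the $(2\pi)^{-n}$ in the corollary's statement is an internal inconsistency of the paper), and that the identification of the supremum needs only the cohomological identity $\lambda(g')\le W(g',0)=n(2\pi)^{-2n}V$ for every $g'$ in the class, so no K\"ahler--Einstein metric need exist.
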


The above corollary shows that the energy level $L(g)$ of entropy $\lambda(\cdot)$
does not depend on the initial K\"ahler metric $g$.

\section {Proof of Proposition \ref{prop-2.1}}

The proof of Proposition \ref{prop-2.1} depends on the following uniqueness result recently proved by Chen and Song in [CS].
This more general uniqueness has been conjectured by the first named author in [T2]. Note that
the uniqueness of K\"ahler-Einstein metrics on a Fano manifold was proved by Bando and Mabuchi in 1985 for
a fixed complex structure [BM].

\begin{theo}([CS])  Let $(M,J)$ be a compact K\"ahler manifold with $c_1(M)>0$.
Suppose that there are two sequences of K\"ahler metrics with K\"ahler class $2\pi c_1(M)$, which converge to K\"ahler-Einstein metrics
$(g_1, J_1)$ and $(g_2, J_2)$ in the $C^\infty$-topology, respectively.  Then $(g_1, J_1)$ is conjugate to $(g_2, J_2)$ by a diffeomorphism of $M$.
\end{theo}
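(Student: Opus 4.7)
The plan is to combine the global optimization property of Perelman's $\lambda$-functional (Proposition 3.3) with the K\"ahler-Ricci flow on $(M,J)$ to bridge the two limits, thereby reducing the problem to a uniqueness statement for K\"ahler-Einstein metrics modulo diffeomorphism.

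I would first show $\lambda(g_1)=\lambda(g_2)$. Since $g_1$ is K\"ahler-Einstein with K\"ahler class $2\pi c_1(M)$ on $(M,J_1)$, Proposition 3.3 asserts that $g_1$ globally maximizes the purely Riemannian functional $\lambda$ among K\"ahler metrics in $2\pi c_1(M)$ with respect to \emph{any} integrable complex structure. In particular $\lambda(g_1)\ge\lambda(g_n^{(2)})\to\lambda(g_2)$, and by symmetry $\lambda(g_1)=\lambda(g_2)=(2\pi)^{-2n}nV$. Next, I would run the K\"ahler-Ricci flow on $(M,J)$ starting from $g_n^{(2)}$ for $n$ large. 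Pulling back by the diffeomorphism $\Phi_n^{(2)}$ realizing the convergence converts this into a flow on $(M,(\Phi_n^{(2)})^\ast J)$ with initial data $C^\infty$-close to the K\"ahler-Einstein pair $(g_2,J_2)$. A stability theorem extending Proposition 2.1 to joint perturbations of metric and complex structure (as developed in [SW]) then gives convergence to a K\"ahler-Einstein metric close to $(g_2,J_2)$; the analogous statement holds for $g_n^{(1)}$ and $(g_1,J_1)$.

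The comparison step is then forced by monotonicity: both flows on $(M,J)$ attain $\lambda_{\max}$ in the limit, and by Lemma 3.5 every $C^3$-subsequential limit is a K\"ahler-Ricci soliton. Since these solitons achieve $\lambda_{\max}$, applying Lemma 3.4 on the limiting complex structure upgrades them to K\"ahler-Einstein metrics. The uniqueness of K\"ahler-Ricci solitons on a fixed complex manifold [TZ2], combined with Bando-Mabuchi uniqueness [BM] on each limiting complex structure, would then conjugate the two K\"ahler-Einstein pairs $(g_1,J_1)$ and $(g_2,J_2)$ by a diffeomorphism of $M$.

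The main obstacle is precisely that $J_1$ and $J_2$ may be genuinely different points in the closure of the $\mathrm{Diff}(M)$-orbit of $J$, and Bando-Mabuchi uniqueness controls K\"ahler-Einstein metrics only within a \emph{fixed} complex structure. Bridging this gap---i.e., showing that $(M,J_1)$ and $(M,J_2)$ are biholomorphic---is the substantive core of the statement; it appears to require either Chen's convexity of the Mabuchi K-energy along $C^{1,1}$-geodesics (the route used in [CS]) or rigidity results for K-polystable degenerations of Fano manifolds. Either tool would have to be adapted to the degeneration setting above in order to conclude the diffeomorphism conjugacy.
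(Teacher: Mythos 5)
This statement is Theorem 5.1, which the paper does not prove at all: it is imported verbatim from Chen--Sun [CS], and Remark 5.3 only notes that a weaker version sufficient for Proposition 2.1 could be obtained by deformation theory. So there is no in-paper argument to compare against; the only question is whether your proposal actually constitutes a proof. It does not, and you have in fact located the gap yourself in your final paragraph. The entire content of the theorem is the identification of the two limiting complex structures: $\lambda(g_1)=\lambda(g_2)=\lambda_{\max}$ follows easily from Proposition 3.3, and the flow/monotonicity machinery (Lemmas 3.4, 3.5) only ever produces K\"ahler--Einstein or soliton limits on \emph{some} complex structure in the closure of the $\mathrm{Diff}(M)$-orbit of $J$. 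Both Bando--Mabuchi uniqueness [BM] and the uniqueness of K\"ahler--Ricci solitons [TZ2] operate within a \emph{fixed} complex structure, so neither can conjugate $(g_1,J_1)$ to $(g_2,J_2)$ when $J_1$ and $J_2$ are a priori distinct degenerations of $J$. Knowing that both pairs maximize $\lambda$ does not force them to be isometric; that is precisely what must be proved, and your argument never does so.

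There is also a circularity problem internal to this paper's logic: you invoke ``a stability theorem extending Proposition 2.1 to joint perturbations of metric and complex structure,'' but Proposition 2.1 is proved in Section 5 \emph{using} Theorem 5.1. Even if one substitutes the independent Lojasiewicz-type result of [SW], the flow started near $(g_2,J_2)$ converges to a K\"ahler--Einstein metric on some nearby complex structure, and the flow started near $(g_1,J_1)$ to one on another; comparing these two outputs is again the original problem. A genuine proof requires an input that compares K\"ahler--Einstein metrics across a degenerating family of complex structures --- in [CS] this is the convexity of the K-energy along $C^{1,1}$-geodesics together with a partial $C^0$-estimate type argument --- and no amount of $\lambda$-functional bookkeeping substitutes for it.
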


\begin{rem} In [CS], Chen and Sun also showed a version of Theorem 5.1 for K\"ahler metrics with constant scalar curvature and
integral K\"ahler class.
\end{rem}

\begin{rem}
In fact, we only need a much weaker version of Theorem 5.1 in order to prove Proposition 2.1. Such a weaker version can be proved by using the deformation theory.
\end{rem}

\begin{proof}[Proof of Proposition \ref{prop-2.1}]
Let $g_t=g(t,\cdot)$ be a family of evolved K\"ahler metrics of
KR-flow $(g_t;g)$. We claim that there exists $\delta>0$ such that if
$g$ satisfies (2.2), then
\begin{align}\|g_t-g_{KE}\|_{C^{3}(M)}\le \epsilon(\delta),~\forall~
t>0, \end{align}
where $\epsilon(\delta)\to 0$ as $\delta\to 0.$  We  will use an  argument by
contradiction from [TZ4].
On contrary, we may find a sequence of K\"ahler metrics
$g^i$ on $(M,J)$ with K\"ahler forms in $2\pi c_1(M)$
and a sequence of diffeomorphisms $\Psi_i$ on $M$
which satisfy
\begin{align} |\Psi_i^\star
g^i-g_{KE}|_{C^{3}(M)}\to 0, \end{align} such that there exists a
number $\epsilon_0>0$ and a sequence of K\"ahler metrics
$g^i(t_i,\cdot)$ with property:
\begin{align} \|g^i_{t_i}-g_{KE}\|_{C^{3}(M)}\ge
\epsilon_0,\end{align}
where $g^i_{t_i}$ is a solution of KR-flow $(g^i_t; g^i)$ at time $t_i$.
Moreover, by the stability of Ricci flow in  finite time and
by the continuity of the Ricci flow, one can choose the number $\epsilon_0$ arbitrary small in (5.1) and
can further assume that $g^i_{t_i}$ satisfies
\begin{align} 2\epsilon_0\ge\|g^i_{t_i}-g_{KE}\|_{C^{3}(M)}\ge
\epsilon_0.\end{align}
In particular, curvatures $\text{Rm}(g^i_{t_i})$ are uniformly
bounded. Thus by the regularity of Ricci flow [Sh],
$\|\text{Rm}(g^i_{t_i})\|_{C^{k-2,\frac{1}{2}}(M, g_{ t_i})}$ are
uniformly bounded for any $k\ge 2$. Hence by Cheeger-Gromov's
compactness theorem (cf. [GW]), one sees that there exist a
subsequence $g^{i_k}_{t_{i_k}}$ of $g^i_{t_i}$ and a sequence of
diffeomorphisms $\Phi_{i_k}$ on $M$ such that $\Phi_{i_k}^\star
g^{i_k}_{t_{i_k}}$ converge to a limit Riemannian metric $g_\infty$
on $M$ in the $C^{k,\frac{1}{2}}$-topology. Clearly by (5.4), it
holds
\begin{align} 2\epsilon_0\ge \|g_\infty-g_{KE}\|_{C^{3}(M)}\ge
\epsilon_0.\end{align} On the other hand, again by (5.4), the
sequence of induced complex structures $(\Phi_{i_k}^{-1})_\star\cdot
J\cdot (\Phi_{i_k})_\star$ (maybe after taking a subsequence of
$i_k$ ) will converge to a $C^{k,\frac{1}{2}}$-differential complex
structure $J_\infty$ on $M$. By the regularity, $J_\infty$ is
analytic. Moreover, one can show that $g_\infty$ is K\"ahler with
respect to $J_\infty$.

By the monotonicity of $\lambda(\cdot)$ along the flow (see (3.8) in
Section 3), we have
$$\lambda(g^{i_k})\le \lambda(g^{i_k}_{t_{i_k}}).$$
It follows
$$\lambda(g_\infty)\ge \lim_{i_k} \lambda(g^{i_k})\ge\lambda(g_{KE}).$$
Thus by Lemma 3.4, we see that $g_\infty$ is in fact a
K\"ahler-Einstein metric on $(M,J_\infty)$. Now  by Theorem 5.1,
$(g_\infty, J_\infty)$ is conjugate to $(g_{KE}, J)$ by a
diffeomorphism on $M$. But this contradicts to (5.5). The
contradiction implies that (5.1) is true.

 By (5.1), we have
\begin{align}\|g_t-g_{KE}\|_{C^{3}(M)}\le \epsilon_0\le 1,~\forall~
t>0. \end{align}
Thus,  there exists a
sequence $\{g_{t_i}\}$ of $g_t$ which
converges to a limit Riemannian metric $g_\infty$ on $M$ in
the $C^{3}$-topology. Hence by Lemma 3.5, $g_\infty$ is a
K\"ahler-Ricci soliton.  Moreover, by Corollary 4.6, $\lambda(g_\infty)=(2\pi)^{-n}nV=\lambda(g_{KE})$
 since the K-energy is bounded from below [BM], [DT],  and consequently,
by Lemma 3.4,  $g_\infty$ must be a K\"ahler-Einstein metric. Therefore, by Theorem 5.1, we conclude that
 $g_\infty$ is conjugate to $g_{KE}$ by a diffeomorphism  on M. Finally, The uniqueness of limit  $g_\infty$ enable us to prove
  that the KR-flow $(g_t; g)$ globally  converges to $g_{KE}$. Proposition \ref{prop-2.1} is proved.

\end{proof}

\section{Proof of Theorem \ref{th-3}}

\begin{proof}[Proof of Theorem 2.3]  By Corollary 2.2, there exists a $\tau_0\le 1$ with
   $[0,\tau_0)\subset I$. We need to show that $\tau_0\in I$.
In fact we want to prove that for any
$\delta>0$ there exists a large $T$ such that
\begin{align}\|g_t^s-g_{KE}\|_{C^{3}(M)}\le \delta,~\forall~
t\ge T~~\text{and}~s< \tau_0. \end{align}
As we did for Proposition \ref{prop-2.1}, we will prove it by contradiction. Suppose that we can find a sequence of
K\"ahler metrics $g_{t_i}^{s_i}$, where $s_i\to \tau_0$ and $t_i\to\infty$, of KR-flows
$(g_t^{s_i}; g^{s_i})$ and a sequence of diffeomorphisms $\Psi_i$ on $M$
such that
\begin{align} |\Psi_i^\star
g_{t_i}^{s_i}-g_{KE}|_{C^{3}(M)}\ge  \delta_0>0,\end{align}
for some constant $\delta_0$. Since the KR-flows $(g_t^{s_i}; g^{s_i})$ converge to $g_{KE}$, we may further
assume that $g_{t_i}^{s_i}$ satisfy
\begin{align} |\Psi_i^\star
g_{t_i}^{s_i}-g_{KE}|_{C^{3}(M)}\le  2\delta_0.
\end{align}
Then it follows from Cheeger-Gromov's compactness theorem that there is a subsequence $g^{s_{i_k}}_{t_{i_k}}$
of $g^i_{t_i}$ converging to a K\"ahler metric $(g_\infty, J_\infty)$
with property
\begin{align} 2\delta_0\ge \|g_\infty-g_{KE}\|_{C^{3}(M)}\ge
\delta_0.\end{align}

We claim that $(g_\infty, J_\infty)$ is a K\"ahler-Einstein metric. By Lemma 3.4, we only need to show that $\lambda(g_\infty)=\lambda(g_{KE})=(2\pi)^{-n}nV$.
It follows from Corollary 4.6 and the monotonicity of $\lambda(g_t^{\tau_0})$ that
for any $\epsilon>0$, there exists $T> 0$ such that
$$\lambda(g^{\tau_0}_t)\ge (2\pi)^{-n}nV-\frac{\epsilon}{2},~~\forall~t\ge T.$$
Since K\"ahler-Ricci flow is stable in finite time and $\lambda(g_t^s)$ is monotonic in $t$, there is a small $\delta> 0$ such that
for any $s\ge \tau_0-\delta$, we have
\begin{align}\lambda(g_t^s)\ge (2\pi)^{-n}nV-\epsilon,~~\forall~t\ge T.\end{align}
Since $s_i\to \tau_0$ and $t_i\to \infty$, we conclude that
$$\lim_{s_i\to \tau_0, t_i\to \infty}\lambda(g^{s_i}_{t_i})=(2\pi)^{-n}nV.$$
By the continuity of $\lambda(\cdot)$, it follows
\begin{align}\lambda(g_\infty)=(2\pi)^{-n}nV=\lambda(g_{KE}).\end{align}
This proves the claim.

Now  by Theorem 5.1,  we see that $(g_\infty, J_\infty)$ is
conjugate to $(g_{KE}, J)$ by a diffeomorphism on $M$. But this
contradicts to (6.4). This contradiction implies that (6.1) is true.

 By (6.1), we have
\begin{align}\|g_t^{\tau_0}-g_{KE}\|_{C^{3}(M)}\le \delta_0\le 1,~\forall~
t\ge T. \end{align}
Thus,  there exists a
sequence $\{g^{\tau_0}_{t_i}\}$ of $g_t^{\tau_0}$ which
converges to a limit Riemannian metric $g_\infty$ on $M$ in the $C^{3}$-topology. Hence, by Lemma 3.5, $g_\infty$ is a
K\"ahler-Ricci soliton.  Moreover, by Corollary 4.6, $\lambda(g_\infty)=(2\pi)^{-n}nV$, so $g_\infty$ must be a K\"ahler-Einstein metric.
Therefore,  by Theorem 5.1, we conclude that
 $g_\infty$ is conjugate to $g_{KE}$ by a diffeomorphism on M. Finally, the uniqueness of limit  $g_\infty$ enable us to prove
 that the KR-flow $(g_t^{\tau_0}; g^{\tau_0})$ globally  converges to $g_{KE}$.  This shows  $\tau_0 \in I$.

\end{proof}

\begin{proof}[Completion of proof of Theorem 1.1] We have shown that $I=[0,1]$, so the first part of Theorem 1.1 is proved.
The second part of Theorem 1.1 is known (cf. [PS], also see [CT2]). For the reader's convenience, we outline its proof.
Since the limiting complex structure $J_\infty$ is conjugate to $J$, by an estimate (3.11) in [PS],
one sees that there exists two uniform constants $C,\delta>$ such that
\begin{align}\int_M|\nabla h_t|_{g_t}^2\omega_{g_t}^n\le Ce^{-\delta t}, ~~\forall ~t>0.\end{align}
Moreover, by using (6.8), one can further derive
$$\int_M|\nabla^p h_t|_{g_t}^{2}\omega_{g_t}^n\le C_pe^{-\delta_p t}, ~\forall ~t>0,
$$
where $p>0$ is any integer and two uniform constants
$C_p,\delta_p>0$ depend only on $p$ (cf. [CT2], [PS], [Zhu],
etc). By using the Sobolev embedding theorem, it follows
$$\| h_t-a\|_{C^0(M)}\le C'e^{-\delta' t},~~\forall ~t>0,$$
for some constant $a$. As a consequence, we get
$$\|\phi_t-\frac{1}{V}\int_M \phi_t \omega_{g_{KE}}^n\|_{C^0(M)}\le C''.$$
Using this, we can prove that all $C^\ell$-norms of $\phi_t$ decay exponentially.
Hence, $\phi_t$ are convergent
and the corresponding metrics $\omega_{\phi_t}$ converge exponentially to a K\"ahler-Einstein metric
$g_{KE}'$ on $(M,J)$. By the uniqueness theorem on K\"ahler-Einstein metrics, $g_{KE}'$ must be conjugate to $g_{KE}$ by a holomorphism transformation.
The proof of Theorem 1.1 is completed.

\end{proof}

\section{Proof of Theorem  \ref{th-2}}

As in the proof of Theorem \ref{th-1},  we consider a path of
K\"ahler forms
$\omega_s=\omega_{g_0}+s\sqrt{-1}\partial\overline\partial\phi$ with
$\omega_g=\omega_{g_0}+\sqrt{-1}\partial\overline\partial\phi$ and
set
\begin{align} I=\{& s\in ~[0,1] |~~ \text{KR-flow} ~(g_t^s;\omega_s)~\text {
converges
to}~ g_{KE} ~\text{ in the}~C^\infty\text{-topology} \}.\notag
 \end{align}
By the assumption, $0\in I$. We shall show that $I$ is both open and closed.

The following lemma should be due to Chen [Ch]. It generalizes Bando-Mabuchi's result about the lower bound of K-energy [BM].

\begin{lem} If there exists a sequence of K\"ahler metrics on $(M,J)$  with
K\"ahler class $2\pi c_1(M)$ converging to a
K\"ahler-Einstein metric $g_{KE}$ for some complex structure $J_0$
in the $C^\infty$-topology, then Mabuchi's K-energy is bounded from
below on the space of K\"ahler potentials on $(M,J)$.
\end{lem}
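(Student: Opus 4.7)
The plan is to leverage Bando--Mabuchi's lower bound for the K-energy on $(M, J_0)$ together with the $C^\infty$-convergence hypothesis to transfer a lower bound to $\mu_J$ on $(M, J)$.

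First, as in the proofs of Proposition 2.1 and Theorem 2.3, I would extract from the hypothesis a sequence of diffeomorphisms $\Psi_k$ of $M$ via Cheeger--Gromov compactness such that $\Psi_k^\star g_k \to g_{KE}$ in the $C^\infty$-topology and, after passing to a subsequence, $\Psi_k^\star J \to \widetilde J_0$ in $C^\infty$ for some integrable complex structure $\widetilde J_0$ compatible with $g_{KE}$. Then $(M, \widetilde J_0, g_{KE})$ is K\"ahler--Einstein, and by the uniqueness Theorem 5.1 $\widetilde J_0$ is equivalent to $J_0$ up to a diffeomorphism; composing $\Psi_k$ with this diffeomorphism we may assume $\Psi_k^\star J \to J_0$ in $C^\infty$. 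Bando--Mabuchi applied to $(M, J_0)$ then gives a lower bound $\mu_{J_0} \ge C_0$ on K\"ahler potentials in class $2\pi c_1(M, J_0)$.

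Next, for an arbitrary K\"ahler potential $\phi$ on $(M, J)$ with $\omega_\phi = \omega_g + \sqrt{-1}\partial\overline\partial\phi \in 2\pi c_1(M, J)$, I would invoke the diffeomorphism invariance of the K-energy,
\[
\mu_J(\omega_\phi) \,=\, \mu_{\Psi_k^\star J}(\Psi_k^\star \omega_\phi),
\]
rewriting $\mu_J(\phi)$ as a K-energy on $(M, \Psi_k^\star J)$. Since the Fano condition is open and $\Psi_k^\star J \to J_0$ in $C^\infty$, the cohomology class $2\pi c_1(M, \Psi_k^\star J)$ agrees with $2\pi c_1(M, J_0)$ for $k$ large, so both sides make sense in the same cohomology class. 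Combining this equality with the Bando--Mabuchi bound on $(M, J_0)$ and a continuity statement for the K-energy under $C^\infty$-perturbation of the complex structure (within a fixed cohomology class) should give $\mu_J(\omega_\phi) \ge C_0 - \varepsilon$ for every $\varepsilon > 0$, hence $\mu_J \ge C_0$ globally.

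The main obstacle is exactly the continuity of $\mu$ under $C^\infty$-perturbations of the complex structure, uniformly in the K\"ahler potential. To address it, I would use Chen's explicit decomposition,
\[
\mu(\phi) \,=\, \int_M \log\!\left(\frac{\omega_\phi^n}{\omega_0^n}\right)\omega_\phi^n \,+\, (\text{integrals of } \phi \text{ against curvature forms of a reference } \omega_0),
\]
which splits $\mu$ into a relative entropy (non-negative by Jensen's inequality) and energy-type terms involving only integrations of smooth closed forms against $\phi$. Taking as reference the pull-backs $(\Psi_k^\star J, \Psi_k^\star g_{KE})$, which converge smoothly to $(J_0, g_{KE})$, the energy terms vary continuously with the reference data in the $C^\infty$-topology, while the entropy is intrinsic once the reference volume form is fixed; passing to the limit $k \to \infty$ then yields the desired transferred bound.
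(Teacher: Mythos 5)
Your first step (extracting diffeomorphisms so that $\Psi_k^\star J \to J_0$ and identifying the limit complex structure with $J_0$ up to conjugation via Theorem 5.1) agrees with the paper. The gap is in the second half: the ``continuity statement for the K-energy under $C^\infty$-perturbation of the complex structure, uniformly in the K\"ahler potential'' on which your transfer argument hinges is false as stated, and it is exactly the difficulty the lemma is designed to overcome. The basic obstruction is that $\Psi_k^\star J$ and $J_0$ are in general \emph{not} biholomorphic --- the complex structure jumps in the limit (this is the whole point of Theorem 1.2, where $(M,J)$ may admit no K\"ahler--Einstein metric while $(M,J_0)$ does). Consequently a K\"ahler potential $\phi$ for $\Psi_k^\star J$ is not a K\"ahler potential for $J_0$: the form $\Psi_k^\star\omega_\phi$ is not of type $(1,1)$ with respect to $J_0$, so $\mu_{J_0}$ cannot even be evaluated on it, and Chen's entropy-plus-energy decomposition gives you no common domain on which to compare $\mu_{\Psi_k^\star J}$ with $\mu_{J_0}$. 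Even if one forces a correspondence, the energy-type terms differ by quantities involving $\phi$ itself (integrals of the shape $\int_M\phi\,\alpha_k\wedge\omega_\phi^{j}\wedge\omega_0^{n-1-j}$ with $\alpha_k\to\alpha_0$), which are not uniformly small over the unbounded space of potentials; only the entropy term is uniformly controlled, and from the wrong side. If the infimum of the K-energy were continuous in $J$ in your sense, Chen's theorem [Ch] would be essentially trivial, which it is not.

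The paper's route differs precisely in this second half: it uses Szekelyhidi's observation [Sz] to package the convergence $\Psi_k^\star J\to J_0$ into an honest holomorphic fibration $\pi:Y\to D$ whose fibres $\pi^{-1}(z)$, $z\neq 0$, are biholomorphic to $(M,J)$ and whose central fibre $(M,J')$ carries a K\"ahler--Einstein metric (with $J'$ conjugate to $J_0$ by [CS]), and then invokes Chen's main theorem in [Ch], which is exactly the statement that the lower bound of the K-energy propagates from the K\"ahler--Einstein central fibre of such a degeneration to the generic fibre. That theorem is the essential input your continuity argument is implicitly trying to reprove by elementary means; without it, your proof does not close.
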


\begin{proof} By our assumption, there are diffeomorphisms $\Psi_i$ of $M$ such that $|(\Psi_i^{-1})_\star\cdot J \cdot (\Psi_i)_\star- J_0|_{C^k(M)}$ converge
 to $0$ ( $\forall~ k>0$). Then an observation of Szekylihidi in [Sz] implies
 that there exists a smooth holomorphic fibration: $\pi: Y\mapsto D$, where $D$ denotes the unit disc in the complex plane,
such that 1) $\pi^{-1}(z)$ is boholomorphic to $(M,J)$; 2)
$\pi^{-1}(0)$ is a compact K\"ahler manifold $(M,J')$ which admits a
K\"ahler-Einstein metric $g_{KE}'$. In fact, by [CS],
 this $J'$ is conjugate to $J_0$. Thus the K-energy is bounded
from below on space of K\"ahler potentials on $(M,J)$ by Chen's main
result in [Ch].
\end{proof}

The following proposition shows that $I$ is an open set.

\begin{prop}  \label{prop-7.3} Let  $(M,J)$ be  a compact K\"ahler manifold with $c_1(M)>0$ and $g_{KE}$
be a K\"ahler-Einstein  metric with K\"ahler class $2\pi c_1(M)$
for some complex structure $J'$ on $M$.  Suppose that there exists a
K\"ahler metric $(J,g_0)$ with  K\"ahler class $2\pi c_1(M)$ such
that the KR-flow with the initial metric $g_0$ converges to $g_{KE}$
in the $C^{3}$-topology. Then there exists $\delta>0$
depending only on $g_{KE}$ and $g_0$ such that for any K\"ahler
metric $g$ on $(M,J)$ with K\"ahler class $2\pi c_1(M)$ satisfying
 \begin{align} \|g-g_0\|_{C^{3}(M)}\le \delta,
 \end{align}
 the KR-flow $(g_t;g)$ converges to $g_{KE}$ in the $C^\infty$-topology.
 \end{prop}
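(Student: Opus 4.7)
The plan is to adapt the contradiction argument used in Section 5 for Proposition \ref{prop-2.1}, with two modifications forced by the present setting: the initial metrics now accumulate at $g_0$ rather than at $g_{KE}$, and $g_{KE}$ is K\"ahler-Einstein with respect to a possibly different complex structure $J'$, so the K\"ahler-Einstein character of any eventual Cheeger-Gromov limit must be recovered via Lemma 3.4 combined with Chen-Sun's uniqueness theorem (Theorem 5.1). A prerequisite is the K-energy lower bound: by Lemma 7.1 applied to the hypothesized convergent flow $(g_t; g_0)$, Mabuchi's K-energy on $(M,J)$ is bounded below, so Corollary 4.6 yields $L(g) = (2\pi)^{-n} nV = \lambda(g_{KE})$ for every K\"ahler initial metric $g$ on $(M,J)$; in particular $\lambda(g_t^0) \nearrow \lambda(g_{KE})$ along the reference flow.

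The heart of the argument is a uniform stability estimate: for every $\epsilon>0$ there exists $\delta>0$ such that $\|g-g_0\|_{C^3(M)}\le\delta$ implies $\|g_t-g_{KE}\|_{C^3(M)}\le\epsilon$ for all $t>0$. Suppose this fails. Then one obtains $\delta_i\to 0$, initial metrics $g^i$ with $\|g^i-g_0\|_{C^3(M)}\le\delta_i$, and times $t_i$ with $\|g^i_{t_i}-g_{KE}\|_{C^3(M)}\ge\epsilon_0$. The continuous dependence of KR-flow on initial data over finite intervals, combined with $g_t^0\to g_{KE}$, forces $t_i\to\infty$; picking $t_i$ as the first violating moment additionally yields $\|g^i_{t_i}-g_{KE}\|_{C^3(M)}\le 2\epsilon_0$. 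Perelman's diameter and non-collapsing bounds (Lemma 4.1) together with Shi-type higher-order curvature estimates then let Cheeger-Gromov compactness produce diffeomorphisms $\Phi_{i_k}$, a K\"ahler limit metric $g_\infty$ and a limit complex structure $J_\infty$ with $\epsilon_0\le\|g_\infty-g_{KE}\|_{C^3(M)}\le 2\epsilon_0$. To obtain the decisive energy bound $\lambda(g_\infty)\ge\lambda(g_{KE})$ --- the step where the proof of Proposition 2.1 used $\lambda(g^i)\to\lambda(g_{KE})$, which is no longer available --- I fix $\epsilon>0$, choose $T$ with $\lambda(g_T^0)\ge\lambda(g_{KE})-\epsilon$, and note that continuous dependence over $[0,T]$ gives $\lambda(g_T^i)\ge\lambda(g_{KE})-2\epsilon$ for $i$ large. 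Since $t_i>T$ eventually, monotonicity of $\lambda$ along KR-flow yields $\lambda(g^i_{t_i})\ge\lambda(g_T^i)\ge\lambda(g_{KE})-2\epsilon$, and taking the Cheeger-Gromov limit followed by $\epsilon\to 0$ completes the step. Lemma 3.4 then promotes $g_\infty$ to a K\"ahler-Einstein metric on $(M,J_\infty)$, and Theorem 5.1 identifies $(g_\infty,J_\infty)$ with $(g_{KE},J')$ modulo a diffeomorphism, contradicting the lower bound $\|g_\infty-g_{KE}\|_{C^3(M)}\ge\epsilon_0$.

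With uniform stability in place, the final step proceeds as in the tail of the proof of Proposition 2.1: any $C^3$-subsequential Cheeger-Gromov limit of $g_t$ is a K\"ahler-Ricci soliton by Lemma 3.5; Corollary 4.6 forces its $\lambda$-value to be $\lambda(g_{KE})$; Lemma 3.4 upgrades it to a K\"ahler-Einstein metric; and Theorem 5.1 identifies it, modulo diffeomorphism, with $g_{KE}$. Uniqueness of the subsequential limit then promotes subsequential convergence to global convergence $(g_t;g)\to g_{KE}$, and Shi's derivative estimates bootstrap the convergence from $C^3$ to $C^\infty$. The main obstacle is precisely the $\lambda$-lower-bound step above: the routing through the intermediate time $T$ along the reference flow is exactly what allows us to dispense with $g^i\to g_{KE}$ and replace it by the weaker hypothesis $g^i\to g_0$ on which Proposition \ref{prop-7.3} actually rests.
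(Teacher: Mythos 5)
Your proposal reproduces the paper's own proof essentially verbatim: the same contradiction argument via Cheeger--Gromov compactness, the same routing of the entropy bound $\lambda(g^i_{t_i})\ge \lambda(g_{KE})-2\epsilon$ through an intermediate time $T$ along the reference flow $(g_t;g_0)$ (which is exactly how the paper's terse ``by the monotonicity of $\lambda(\cdot)$ and (7.3)'' is spelled out in its Section 6), and the same endgame combining Lemma 7.1, Corollary 4.6, Lemmas 3.4 and 3.5, and Theorem 5.1. The one slip is that your uniform stability claim cannot hold ``for all $t>0$'' when $g_0$ is far from $g_{KE}$ (take $g=g_0$ and $t$ small); it must be stated, as in the paper's claim (7.2), for all $t\ge T$ with $T$ chosen so that $\|g^0_t-g_{KE}\|_{C^3(M)}<\epsilon/2$ for $t\ge T$ --- and it is precisely this restriction that makes your inference ``$t_i\to\infty$'' valid.
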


\begin{proof} Its proof is identical to that for Proposition 2.1.
Since the KR-flow is stable in finite time, by our assumption, we see that for any $\epsilon>0$, there exist $\delta> 0$
and a large $T$ depending only on $\epsilon$ and $g_0$ such that for any K\"ahler metric $g$
on  $(M,J)$ with K\"ahler class $2\pi c_1(M)$, whenever
$$\|g-g_0\|_{C^{3}(M)}\le \delta,$$
then the evolved K\"ahler metric $g_T$ of KR-flow $(g_t; g)$ satisfies
\begin{align}\| g_T-g_{KE}\|_{C^{3}(M)}< \epsilon.\end{align}
We claim that (7.2) holds for any $t\ge T$. If the claim is false,
there is a sequence of K\"ahler metrics
$g^i$ on $(M,J)$ with K\"ahler class $2\pi c_1(M)$ satisfying
\begin{align} ||g^i-g_0||_{C^{3}(M)}\to 0, ~\text{as}~ i\to\infty,\end{align} such
that there exist $t_i\to\infty$ satisfying:
\begin{align}  2 \epsilon \ge\|g^i_{t_i}-g_{KE}\|_{C^{3}(M)}\ge
\epsilon,\end{align} where $g^i_{t_i}$ is the evolved metric of the
KR-flow $(g^i_t; g^i)$ at time $t_i$. Then by Cheeger-Gromov's
compactness theorem, there exists a subsequence $g^{i_k}_{t_{i_k}}$
of $g^i_{t_i}$ which converges to a K\"ahler metric $(g_\infty,
J_\infty)$ with property
\begin{align} 2\epsilon\ge \|g_\infty-g_{KE}\|_{C^{3}(M)}\ge
\epsilon.\end{align} Moreover, by the monotonicity of
$\lambda(\cdot)$ and (7.3), it is easy to see
$$\lambda(g_\infty)\ge\lambda(g_{KE}).$$
It follows from Lemma 3.4 that $g_\infty$ is actually a
K\"ahler-Einstein metric on $(M,J_\infty)$. Consequently, by Theorem
5.1, $(g_\infty, J_\infty)$ is conjugate to $(g_{KE}, J')$ by a
diffeomorphism on $M$. But this contradicts to (7.5). So the claim
is true.

By the claim, we see that
\begin{align}\|g_t-g_{KE}\|_{C^{3}(M)}\le C,
\end{align}
for all K\"ahler metrics of $(g_t; g)$ as long as $g$ satisfies (7.1). Then there exists a
sequence $\{g_{t_i}\}$ of $g_t$ which converges to a K\"ahler-Ricci soliton $g_\infty$ according to Lemma 3.5.  Moreover, since the K-energy is bounded from below according to Lemma 7.1, one can easily show that $g_\infty$ is actually a K\"ahler-Einstein metric, which has to be conjugate to $g_{KE}$ by a diffeomorphism of M.
Therefore, the flow $(g_t; g)$
converges to $g_{KE}$ in the $C^\infty$-topology.

 \end{proof}

\begin{proof}[Proof of Theorem 1.2] By Proposition \ref{prop-7.3}, we only need to prove that $I$ is closed. However, the proof
of Theorem 2.3 still applies here since Corollary 4.6 is still true
by Lemma 7.1. So we refer the readers to Section 6 for showing that
$I$ is closed.

\end{proof}

\section{Appendix  }

In this appendix, we give a uniform $L^\infty$-estimate for
minimizing solutions of (3.5) associated to the evolved K\"ahler
metrics of KR-flow. The result was used in Section 4 and is of independent interest.

\begin{prop} Let $f=f_t$ be a minimizer of $W(g_t,\cdot)$ for the evolved K\"ahler metrics $g=g_t$ of (2.1). Then $f$ is uniformly bounded.
\end{prop}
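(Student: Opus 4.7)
The plan is to reformulate the estimate as a uniform $L^\infty$-bound for a related positive function and then to derive it by Moser iteration. Setting $u_t := e^{-f_t/2} > 0$, the normalization \eqref{norm-1} becomes $\int_M u_t^2\,dV_{g_t} = V$, and a direct computation converts the Euler--Lagrange equation \eqref{w-equ-1} into the semilinear PDE
\begin{equation*}
-4\Delta_{g_t} u_t + R(g_t)\, u_t - 4\, u_t\log u_t \,=\, 2\lambda(g_t)\, u_t.
\end{equation*}
Bounding $\|f_t\|_{C^0}$ uniformly is then equivalent to bounding $u_t$ both from above and away from zero, uniformly in $t$.

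The ingredients are all $t$-uniform: $|R(g_t)|$ and $\mathrm{diam}(M,g_t)$ are bounded and $(M,g_t)$ is $\kappa$-noncollapsed by Lemma \ref{lem:perelman-1}; moreover $|\lambda(g_t)|$ is bounded, since \eqref{eq:lambda-3} gives monotonicity and Proposition \ref{unique-3} yields the upper bound $\lambda(g_t) \le (2\pi)^{-m}(m/2)V$. Together these imply a Sobolev and a log-Sobolev inequality for $(M,g_t)$ with constants independent of $t$.

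For the upper bound on $u_t$, multiply the PDE by $u_t^{2q-1}$ ($q \ge 1$) and integrate:
\begin{equation*}
\frac{4(2q-1)}{q^2}\int |\nabla u_t^{q}|^2\,dV_{g_t} \,=\, -\int R\, u_t^{2q}\,dV_{g_t} + 4\int u_t^{2q}\log u_t\,dV_{g_t} + 2\lambda\int u_t^{2q}\,dV_{g_t}.
\end{equation*}
The troublesome term $\int u_t^{2q}\log u_t$ is absorbed by applying the uniform log-Sobolev inequality to $v = u_t^q$; combined with the Sobolev embedding this produces a Moser chain $\|u_t\|_{L^{2q\gamma}} \le C(q)^{1/q}\|u_t\|_{L^{2q}}$ with $\gamma = m/(m-2)$ and with $\prod_k C(q_k)^{1/q_k}$ finite, and iterating from the base $\|u_t\|_{L^2}^2 = V$ yields $\|u_t\|_{L^\infty} \le C$. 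For the lower bound, once $u_t \le C_0$ is known the quantity $|u_t\log u_t|$ is bounded (since $-s\log s \le 1/e$ on $(0,1]$ and $|s\log s| \le C_0|\log C_0|$ on $[1,C_0]$), so the PDE reads $|\Delta_{g_t} u_t| \le C'$. The constraint $\int u_t^2 = V$ together with $u_t \le C_0$ forces $\|u_t\|_{L^1}$ to be bounded below by a positive constant, and Trudinger's weak Harnack inequality, applied to the nonnegative $u_t$ satisfying $\Delta u_t \le C'$, then propagates this through $M$ (using the $\kappa$-noncollapsing and diameter bound) to a uniform $u_t \ge c > 0$. Consequently $\|f_t\|_{C^0} = \|2\log u_t\|_{C^0} \le \log(C/c)$.

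The main obstacle is the nonlinear $u\log u$ term in the Moser iteration for the upper bound; it is controlled uniformly in $t$ precisely because the log-Sobolev constant along the KR-flow depends only on Perelman's $\kappa$-noncollapsing and the uniform scalar curvature bound in Lemma \ref{lem:perelman-1}.
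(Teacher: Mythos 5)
Your substitution $u=e^{-f/2}$ and the Moser iteration for the upper bound on $u$ (equivalently $f\ge -C$) are essentially the paper's argument; the paper absorbs the $u\log u$ term by the elementary inequality $u\log u\le u^{1+\delta}+C_\delta$ together with H\"older and the uniform Sobolev inequality of [Zha], rather than by a log--Sobolev inequality, but either device works. The genuine gap is in your \emph{lower} bound on $u$, i.e.\ the upper bound on $f$, which is in fact the harder half of the proposition. The implication you invoke --- that $u\ge 0$, $\Delta_{g_t}u\le C'$, $u\le C_0$ and $\int_M u^2\,dV_{g_t}=V$ force $u\ge c>0$ via the weak Harnack inequality --- is false. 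The weak Harnack inequality for a nonnegative supersolution gives
\begin{equation*}
\Bigl(\frac{1}{|B_{2R}|}\int_{B_{2R}}u^{p}\,dV\Bigr)^{1/p}\,\le\,C\Bigl(\inf_{B_{R}}u+R^{2}\sup|\Delta u|\Bigr),
\end{equation*}
and the inhomogeneous term $R^{2}C'$ is a fixed constant, not a small one; it cannot be absorbed, and chaining over balls only accumulates such errors. Concretely, a smooth nonnegative function behaving like $\mathrm{dist}(x,x_0)^{2}$ near a point $x_0$ has bounded Laplacian, is bounded above, can be normalized in $L^{2}$, and still vanishes at $x_0$; nothing in the list of facts you use rules this out. (The sharper structure $\Delta u\le Cu(1+|\log u|)$ does not rescue the argument either, since $\mathrm{dist}(x,x_0)^{\alpha}$ with $\alpha$ large satisfies $\Delta u\le C_\epsilon u^{1-\epsilon}$ and vanishes.)

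The paper proves the upper bound on $f$ by an integral argument that uses more of the structure of the minimizer. First, from $f\ge -C'$ and the normalization $\int_M e^{-f}dV_g=V$, the sublevel set $\{f<A\}$ has measure bounded below for $A$ large, which yields $\int_M fe^{h}dV_g\le (V-\delta)^{1/2}(\int_M f^{2}e^{h}dV_g)^{1/2}+CA$. Second, integrating the Euler--Lagrange equation (3.5) gives $\int_M|\nabla f|^{2}dV_g\le\int_M f\,dV_g+C''$, and the weighted Poincar\'e inequality of [TZ3] turns this into $\int_M f^{2}e^{h}dV_g\le\frac{1}{V}(\int_M fe^{h}dV_g)^{2}+b\int_M f\,dV_g+C'''$. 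Since $\frac{1}{V-\delta}>\frac{1}{V}$, the two inequalities together bound $\int_M f^{2}dV_g$. Finally, $\Delta f\ge -f-B$ and Moser iteration for subsolutions bound $\|f^{+}\|_{L^\infty}$ by $\|f\|_{L^{2}}$ --- note this last step is legitimate precisely because it is a one-sided (subsolution) estimate for $f^{+}$, unlike the two-sided Harnack conclusion you need for $u$. You would have to replace your weak-Harnack step by an argument of this kind, exploiting the normalization condition and the spectral-gap/weighted-Poincar\'e structure; as written, that half of the proposition is not established.
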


\begin{proof} Let $u=e^{-\frac{f}{2}}$. Then by (3.5), $u$ satisfies
\begin{align}\Delta u-\frac{1}{2}fu-\frac{1}{4}Ru=\frac{1}{2}\lambda(g)u.\end{align}
Since the scalar curvature $R$ is uniformly bounded by Lemma 4.1,  we
have
\begin{align} -\Delta u\le u^{1+\delta}+C_\delta,\end{align}
for any $\delta>0$. Thus for any $p>1$, we have
$$-u^{\frac{p}{2}}\Delta u^{\frac{p}{2}}\le
\frac{p}{2}u^{p+\delta}+C_\delta\frac{p}{2}u^{p-1+\delta}.$$ By the
integration by parts, we get
\begin{align} \int_M|\nabla u^{\frac{p}{2}}|^2 dV_g&\le p (\int_M u^{p+\delta}dV_g+C_1)\notag\\
&\le
p[(\|u\|_{L^{\frac{(1+\epsilon)\delta}{\epsilon}}})^{\frac{\epsilon}{1+\epsilon}})
(\|u\|_{L^{(1+\epsilon)p}})^{\frac{1}{1+\epsilon}}+ C_1]\notag\\
&\le
C_2p[(\|u\|_{L^{(1+\epsilon)p}})^{\frac{1}{1+\epsilon}}+1],\end{align}
where $\epsilon<\frac{2}{n-2}$. At the last inequality we used the
fact $\int_M u^2=\int_M dV_g=V$. On the other hand, since the scalar curvature $R$ is uniformly bounded, we have the Sobolev inequality (cf. [Zha]):
$$ \int_M |\nabla \psi|^2 dV_g \ge \delta_0 \int_M \psi^{\frac{2n}{n-2}} dV_g- C_0\int_M \psi^2 dV_g,$$
for any $W^{1,2}$-function $\psi$, where  $\delta_0$ and $C_0$ are uniform constants. In particular, for
$\psi=u^{\frac{p}{2}}$, we have
$$\int_M|\nabla u^{\frac{p}{2}}|^2dV_g\ge \delta_0(\|u\|_{L^{\frac{np}{n-2}}})^{\frac{n-2}{n}} -C_0'\int_M u^pdV_g.$$
Thus we derive from (8.3)
$$\|u\|_{L^{\frac{np}{n-2}}}\le(C_3p)^{\frac{1}{p}}
(\|u\|_{L^{(1+\epsilon)p}}+1).$$
Iterating the above over $p$, we can deduce
$$\|u\|_{L^\infty}\le C_4(\int_M u^2dV_g+1)=C_4(V+1),$$
consequently,
\begin{align} f\ge -C'.\end{align}

Note the normalized condition $\int_M  e^{-f}dV_g=V$. By (8.4),
we can find an $A$ and an open set $E_A$ of $M$ for each $f$ such that
$$\int_{E_A} e^hdV_g \,> \,\delta,$$
where $E_A\subset\{x\in~ M|~~f(x) < A\}$ and $h=h_t$ is the Ricci potential function of
$g=g_t$ with the normalization
$$\int_Me^hdV_g=V.$$
Note that $h$ is uniformly bounded by Lemma 4.1. Hence, we have
\begin{align}\int_M fe^hdV_g &=\int_{M\setminus E_A}
 fe^hdV_g+\int_{E_A}
fe^hdV_g\notag\\
&\le \left(\int_{M\setminus E_A} e^h dV_g \right)^{\frac{1}{2}}\left(\int_{M\setminus E_A}
f^2e^hdV_g\right)^{\frac{1}{2}} + C A.\notag\end{align}
It follows
\begin{align}\int_M f^2e^hdV_g \ge \frac{1}{V-\delta}(\int_M
fe^hdV_g)^2-a (\int_M f^2e^hdV_g)^{\frac{1}{2}}-C',\end{align}
for some constant $a,C'>0$.  On the other hand, by integrating (3.5) at both two sides, we have
$$ \int_M |\nabla f|^2dV_g\le \int_M fdV_g+C''.$$
Then by using the following weighted Poincare's
inequality (cf. [TZ3]),
$$\int_M |\nabla f|^2e^hdV_g\ge  \int_M f^2e^hdV_g -\frac{1}{V}(\int_M
fe^hdV_g)^2,$$
we obtain
\begin{align}\int_M f^2e^hdV_g \le \frac{1}{V}(\int_M fe^hdV_g)^2+
b\int_M fdV_g+C''',\end{align}
for some constant $b,C'''>0$. Combining this with (8.5), we deduce
$$\int_M f^2 e^hdV_g \le C_6, $$
consequently,
\begin{align}\int_M f^2dV_g \le C_7. \end{align}

It follows from (3.5) that $f$ satisfies,
$$\Delta f\ge - f -B.$$
Then by the standard $L^\infty$-estimate for
elliptic equations of second order, we have
$$\|f^+\|_{L^\infty}\le C(\int_M f^2dV_g+1)\le C_8,$$
where $f^+=\text{max}\{f,0\}$. Thus we have proved that $f$ is uniformly bounded.
\end{proof}

\enddocument

\end{document}